\newcommand{\cd}{\cdot}
\newcommand{\ra}{\rightarrow}
\newcommand{\pr}{\prime}
\newcommand{\de}{\partial}
\newcommand{\te}{\theta}
\newcommand{\Q}{\mathbb{Q}}
\newcommand{\R}{\mathbb{R}}
\newcommand{\Z}{\mathbb{Z}}
\newcommand{\abs}[1]{\left\lvert #1 \right\rvert}
\newcommand{\lbar}[1]{\overline{#1}}
\newtheorem{theorem}{Theorem}
\newtheorem*{theorem*}{Theorem}
\newtheorem{lemma}{Lemma}
\theoremstyle{definition}
\newtheorem{definition}{Definition}
\theoremstyle{remark}
\newtheorem*{note}{Note}
\newtheorem{fact}{Fact}
\begin{document}

\title{Enhanced Hantzsche Theorem}
\date{\today}

\author{Michael H. Freedman}
\address{\hspace{-\parindent}Michael H. Freedman}
\email{mfreedman@cmsa.fas.harvard.edu}

\begin{abstract}
	A closed 3-manifold $M$ may be described up to some indeterminacy by a Heegaard diagram $\mathcal{D}$. The question ``Does $M$ smoothly embed in $\R^4$?'' is equivalent to a property of $\mathcal{D}$ which we call \emph{doubly unlinked} (DU). This perspective leads to an enhancement of Hantzsche's embedding obstruction.
\end{abstract}

\maketitle

\section{Introduction}
We work entirely in the smooth category. It is well known that every closed 3-manifold $M$ embeds in $\R^5$. Which $M^3$ embed in $\R^4$ is an area of current activity \cite{bb22} and also has a long history \cite{hantzsche37}. Recently, \cite{m22} used gauge theory to construct a 3D integral homology sphere $X$ which, although it embeds in some 4D integral homology sphere, does \emph{not} embed in $S^4$. Although gauge theory and (relatedly) Floer homology are powerful tools, they do not appear to distinguish a homotopy 4-sphere from $S^4$ ($:= S^4_{\text{std}}$), and indeed \cite{m22} shows that $X$ does not embed in any homotopy 4-sphere.

It is a natural thought that to distinguish $S^4$ from some other homotopy 4-spheres (if they exist), that the product structure
\begin{equation}\label{eq:product-structure}
	\R^3 \times \R \cong \R^4 \cong S^4 \setminus \text{pt}
\end{equation}
could be useful. In fact, among existing tools, Khovanov homology and in particular the Rassmussen $s$-invariant stand out because they \emph{may} depend on, and at least the definition exploits, the product structure \eqref{eq:product-structure}. The program of \cite{fgmw10} and \cite{mp23}, still open, was to use a version of the $s$-invariant to distinguish some homotopy ball $\mathcal{B}^4$ from $B^4$ in terms of the slice genus of some knot $K \subset S^3$ in their common boundary. The foliation by level sets may elucidate relative embeddings of surfaces.

In this note, we replay the same idea: use the foliated structure \eqref{eq:product-structure} to find some condition which $M^3 \hookrightarrow S^4$ must satisfy which would not necessarily obstruct an embedding of $M^3$ in the general homotopy 4-sphere. Both programs remind us of nuclear physics; an object (a homotopy 4-sphere) is studied by shooting something smaller (surfaces or now 3-manifolds) at it to see what sticks. The new condition is on a Heegaard diagram $\mathcal{D}$ for the 3-manifold $M$, defined below. We are successful in finding a condition, doubly unlinked (DU), defined below, which is equivalent to the embedding problem $M^3 \overset{?}{\hookrightarrow} S^4$. Where we fall short is in proposing a new tool to analyze this new condition; we have found nothing like Khovanov homology to make computations. The condition DU is manifestly a strong one, since it is ``rare'' that $M^3$ embeds in $S^4$. In section \ref{sec:weak-unlink}, we prove an enhanced Hantzsche's theorem. It has the same conclusion on the linking forms but requires an input weaker than an embedding; the condition ``unlink'' within DU is replaced by ``linking numbers vanish.'' Hantzsche's theorem says that if $M^3$ embeds in $S^4$, its linking form LF on $\operatorname{tor}(H_1(M^3,\Z))$ vanishes on a $\surd$-order subgroup, i.e.\ MF$(M)$ is hyperbolic; see section \ref{sec:weak-unlink}.

The author would like to thank Cameron Gordon and Slava Krushkal for helpful conversations.

To fix notation, we review some basic surface theory. The handlebody $H_g$ is the oriented 3-manifold obtained by attaching $g$ 1-handles to the 3-ball. The co-cores of the 1-handles are 2-disks which meet $\de H_g =: \Sigma_g$ in $g$ pairwise-disjoint, homologically independent, simple closed curves (scc). A set of $g$ pairwise-disjoint homologically independent scc is called a \emph{geometric Lagrangian} (GL) on $\Sigma_g$. The mapping class group of the handlebody, MCG$(H_g)$, also called the \emph{handlebody group}, acts transitively on the GLs that bound disjoint disks in $H_g$. Two GLs on a surface are \emph{handlebody equivalent} iff they determine the same HB under handle attachment, equivalently if the handlebody group of either GL has the other GL in its orbit. More generally, MCG$(\Sigma_g)$ acts transitively on the set of all GL. A \emph{Heegaard diagram} for $M$ consists of two GLs, one drawn in red and the other in blue. The red $R$ (blue $B$) GL determines an upper (lower) $\mathrm{HB}_u$ ($\mathrm{HB}_\ell$) boundary $\Sigma_g$ ($-\Sigma_g$) with $M = \mathrm{HB}_\ell \smile_{\Sigma_g} -\mathrm{HB}_u$.

More generally, if handlebodies $\mathrm{HB}_1$ and $\mathrm{HB}_2$ are glued by $f: \de \mathrm{HB}_1 \ra \de \mathrm{HB}_2$, $f$ and $h_2\big\vert_{\de \mathrm{HB}_2} \circ f \circ h_1\big\vert_{\de\mathrm{HB}_1}$ determine the same 3-manifold for $h_1 \in \operatorname{MCG}(\mathrm{HB}_1)$ and $h_2 \in \operatorname{MCG}(\mathrm{HB}_2)$. That, and bar-stabilization, defined below, generate the full indeterminacy, up to isotopy, of the Heegaard decomposition (HD) of a fixed $M^3$ (via the classical Reidemeister-Singer theorem \cite{lau14}).

\begin{definition}
	A Heegaard diagram $\mathcal{D}$ on $\Sigma_g$ is \emph{doubly unlinked} iff there exists an embedding $e: \lbar{\Sigma} \hookrightarrow S^3$ of a bar-stabilization of $\Sigma_g$ so that the bar-stabilizations $\lbar{R}$ and $\lbar{B}$ can be mapped (separately) to 0-framed unlinks $e(h_\ell(\lbar{R}))$ and $e(h_u(\lbar{B}))$ under the actions of the lower (upper) handlebody groups, $h_\ell \in \mathrm{HG}_\ell$ and $h_u \in \mathrm{HG}_u$.
\end{definition}

To connect to the algebra, $H_1(\Sigma_g;\Z) \cong Z^{2g}$ may be given a standard basis $(x_1,\dots,x_g,p_1,\dots,p_g)$ of ``meridians'' and ``longitudes'' with a symplectic intersection form $\omega$:
\begin{equation}
	\begin{split}
		& x_i \cd x_j = p_i \cd p_j = 0 \\
		& x_i \cd p_j = \delta_{ij},\ p_j \cd x_i = -\delta_{ij}
	\end{split}
\end{equation}
Considering  the action of MCG$(\Sigma_g) =: \mathrm{MCG}_g$ on $H_1(\Sigma_g;\Z)$, one obtains an epimorphism:
\begin{equation}\label{eq:epimorphism}
	1 \ra \mathcal{I}_g \ra \mathrm{MCG}_g \ra \operatorname{SP}(2g;\Z) \ra 1,
\end{equation}
the group of symplectic automoprhisms of $\Z^{2g}$. The map is easily seen to be onto by lifting the Burkhardt generators of SP$(2g,\Z)$ back to rather easy surface automorphisms, see \cite{fm11}.

In the algebraic context, a Lagrangian is a sublattice $L$, with (1) $\Z^{2g} \slash L \cong \Z^g$, and (2) $\omega \equiv 0$ on $L$. It follows from \cite{fm11} that every alegebraic $L$ lifts to a GL $\mathcal{L}$ with indeterminacy given by the action of the Torelli group $\mathcal{I}_g$, the kernel on \eqref{eq:epimorphism}.

\subsection*{Stabilization}
Now consider stabilization of genus $\Sigma_g \ra \Sigma_{g+k}$ and attendant stabilizations $\mathrm{MCG}_g$ $\ra \mathrm{MCG}_{g+k}$ of the mapping class groups. We use two notions for stabilizing:

\begin{align}
	& \mathrm{MCG}_g \xrightarrow{-} \mathrm{MGC}_{g+k}, \text{ and} \\
	& \mathrm{MCG}_g \xrightarrow{\wedge} \mathrm{MCG}_{g+k+k^\pr}
\end{align}
The bar, $f \mapsto \lbar{f}$, means isotope $f$ to be fixed near some point $p$ of $\Sigma_g$, at $p$ form $\Sigma_g \# \Sigma_k$ and extend $f$ over $\Sigma_k^-$ via the standard geometric symplectic map:
\begin{equation}
	\begin{array}{rcccccc}
		\multicolumn{1}{l}{} & m_1 & & m_k & \ell_1 & & \ell_k \\
		\multicolumn{1}{r|}{m_1} & & & \multicolumn{1}{l|}{} & -1 & & \multicolumn{1}{l|}{} \\
		\multicolumn{1}{r|}{} & & \scalebox{1.5}{0} & \multicolumn{1}{l|}{}  & & \ddots & \multicolumn{1}{l|}{} \\
		\multicolumn{1}{r|}{m_k} & & & \multicolumn{1}{l|}{} & & & \multicolumn{1}{l|}{-1} \\ \cline{2-7} 
		\multicolumn{1}{r|}{\ell_1} & 1 & & \multicolumn{1}{l|}{} & & & \multicolumn{1}{l|}{}   \\
		\multicolumn{1}{r|}{} & & \ddots & \multicolumn{1}{l|}{} & & \scalebox{1.5}{0} & \multicolumn{1}{l|}{}   \\
		\multicolumn{1}{r|}{\ell_k} & & & \multicolumn{1}{l|}{1} & & & \multicolumn{1}{l|}{}  
	\end{array}
\end{equation}
exchanging meridians and longitudes with the indicated signs. Sometimes we write $\lbar{\Sigma}$ for $\Sigma_g \# \Sigma_k$, treating $k$ as variable. Similarly, $\widehat{f}$ is an automorphism of $\Sigma_g \# \Sigma_k \# \Sigma_{k^\pr} =: \widehat{\Sigma}$, with the extension as above over $\Sigma_k^-$ but via the \emph{identity} on $\Sigma_{k^\pr}^-$.

Recalling that $\de H_g = \Sigma_g$, $f: \Sigma_g \ra \Sigma_g$ determines a closed 3-manifold
\begin{equation}
	M_f \cong H_g \smile_f -H_g.
\end{equation}
In these terms, bar-stabilization stabilizes the Heegaard decompositions of $M_f$,
\begin{equation}
	M_{\lbar{f}} \cong M_f,
\end{equation}
whereas
\begin{equation}\label{eq:stabilize-manifold}
	M_{\widehat{f}} \cong M_f \#(\#_{k^\pr} S^1 \times S^2),
\end{equation}
stabilizes the manifold as well. A Heegaard decomposition $\mathcal{D}$ is bar-stabilized by adding $\{m_1,\dots,m_k\}$ to $R$ and $\{\ell_1,\dots,\ell_k\}$ to $B$. Similarly, $\hat{\phantom{\_}}$-stabilization further adds $\{m_{k+1},\dots,m_{k+k^\pr}\}$ to $\lbar{R}$ and another copy of $\{m_{k+1},\dots,m_{k+k^\pr}\}$ to $\lbar{B}$.

Related to the definition of doubly unlinked (DU), we have:

\begin{definition}
	$f \in \operatorname{MCG}(\Sigma_g)$ is \emph{unlink preserving} (UP) iff there exists a GL $\lbar{\mathcal{L}} \subset \lbar{\Sigma}_g$, a bar-stabilization of $\Sigma_g$ (some $k \geq 0$), and a choice of embedding $\lbar{\Sigma} \overset{e}{\hookrightarrow} S^3$ into the 3-sphere so that under this embedding, both $e(\lbar{\mathcal{L}})$ and $e(\lbar{f}(\lbar{\mathcal{L}}))$ are zero-framed unlinks. The framings are determined by the normal to $\lbar{\mathcal{L}}$ in $\lbar{\Sigma}$. If $f$ is not UP we say it is \emph{unlink busting} (UB). We will sometimes drop $e$ from the notation.
\end{definition}

\begin{note}
	The properties UP and UB depend only on the conjugacy class of $f$ in $\mathrm{MCG}_g$.
\end{note}

\begin{theorem}\label{thm:ub-up}
	Let $M$ be the closed 3-manifold with Heegaard diagram $\mathcal{D}$. Then $M$ (smoothly) embeds in $\R^4$ iff some bar-stabilization of $\mathcal{D}$ is DU. In terms of a gluing map $f$, $f$ is UP iff for some conjugate $\lbar{f}^g$ of some bar-stabilization of $f$, $\mathrm{HB} \smile_{\lbar{f}^g} -\mathrm{HB}$ (smoothly) embeds in $\R^4$.
\end{theorem}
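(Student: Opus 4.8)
The plan is to connect both implications to a single geometric picture: put the standard height function $h\colon S^4\to[-1,1]$ on $S^4$, write $S^4=D^4_-\cup_{S^3}D^4_+$ for the two $4$-ball hemispheres glued along the equatorial $S^3=h^{-1}(0)$, and exploit the foliation by level $3$-spheres of \eqref{eq:product-structure}. Since $M$ is closed, $M\hookrightarrow\R^4$ iff $M\hookrightarrow S^4$, so I work in $S^4$. For the implication assuming DU, I would begin with the data $e\colon\lbar{\Sigma}\hookrightarrow S^3$ together with the $0$-framed unlinks $e(h_\ell(\lbar{R}))$ and $e(h_u(\lbar{B}))$, and place $e(\lbar{\Sigma})$ on the equator. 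A $0$-framed unlink bounds disjoint embedded disks in $S^3$; pushing the disks bounded by $e(h_\ell(\lbar{R}))$ into $D^4_-$ and thickening along the $0$-framing turns the collar $e(\lbar{\Sigma})\times I$ into an embedded $\mathrm{HB}_\ell\subset D^4_-$ with $\partial=e(\lbar{\Sigma})$ (the top surface becomes a sphere bounding a ball). The symmetric construction gives $\mathrm{HB}_u\subset D^4_+$, and gluing along $e(\lbar{\Sigma})$ embeds $M=\mathrm{HB}_\ell\smile\mathrm{HB}_u$ in $S^4$; the handlebody-group elements $h_\ell,h_u$ are harmless because $h_\ell(\lbar{R})$ is handlebody-equivalent to $\lbar{R}$, so the capped manifold is still $M$, using $M_{\lbar{f}}\cong M_f$.

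For the converse I would isotope the embedding so that $h|_M$ is Morse, then rearrange and cancel critical points so that all index-$0$ and index-$1$ critical points lie in $D^4_-$ and all index-$2$ and index-$3$ critical points lie in $D^4_+$, with a single index-$0$ point so the lower piece is connected. Then $M_-:=M\cap D^4_-$ is built from $0$- and $1$-handles, hence is a $3$-dimensional handlebody $\mathrm{HB}_\ell\hookrightarrow D^4_-$; dually $M_+:=M\cap D^4_+$ is a handlebody $\mathrm{HB}_u\hookrightarrow D^4_+$, and they meet in the Heegaard surface $\Sigma:=M\cap S^3$. This produces a Heegaard splitting of $M$; by Reidemeister--Singer \cite{lau14} it and the given $\mathcal D$ share a common bar-stabilization (bar-stabilization being precisely the move preserving $M$, $M_{\lbar{f}}\cong M_f$), and since the handlebody-group indeterminacy is already built into the definition of DU, it suffices to prove that the splitting $(\mathrm{HB}_\ell,\mathrm{HB}_u)$ just produced is DU, together with the routine fact that bar-stabilization (adding a small unknotted, unlinked, $0$-framed meridian to each unlink) preserves DU.

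The heart of the matter, and the step I expect to be the main obstacle, is the claim that a ``downward-built'' handlebody $M_-\hookrightarrow D^4_-$ has a meridian system that is a $0$-framed unlink in $S^3$ after a handlebody-group action (and, if necessary, a further bar-stabilization). The mechanism is visible in the surface movie $\{M\cap h^{-1}(t)\}_{t\le 0}$: each index-$1$ critical point appears as a saddle (tube) attached in $\R^3$, and the co-core of the corresponding $1$-handle of $M_-$ is the belt disk of that tube, with boundary a small meridian circle. I would argue that these belt circles can be chosen to bound disjoint small disks meeting $\Sigma$ only along their boundaries, so that they are individually unknotted and, their disks being disjoint, collectively unlinked---exactly the unlink assertion. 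The delicate points are: (i) the meridian system read off from the $1$-handles need only agree with $\lbar{R}$ up to an element of $\mathrm{HG}_\ell$, which DU permits; (ii) the framing required is the surface framing of $\lbar{R}$, and any discrepancy with the disk framing of the unlink is a number of twists absorbed by Dehn twists along meridians, which lie in $\mathrm{HG}_\ell$; and (iii) checking that the possibly necessary stabilizations keep the system unlinked. Applying the mirror argument to $M_+$ (whose dual $1$-handles carry the blue system $\lbar{B}$) yields the full DU conclusion, and hence the equivalence.

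Finally, the gluing-map reformulation is a restatement of the first equivalence. Writing $M_f=\mathrm{HB}\smile_f-\mathrm{HB}$, the condition UP on $f$ is exactly DU for the diagram determined by $f$, with the conjugate $\lbar{f}^{\,g}$ and the allowed bar-stabilization encoding the two handlebody-group actions and the Reidemeister--Singer indeterminacy already present in DU. I would obtain it by applying the first part to $M_f$ and unwinding definitions, using that UP and UB depend only on the conjugacy class of $f$.
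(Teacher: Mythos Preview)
Your overall strategy matches the paper's: ordered ambient Morse theory and the ``movie'' of belt circles for the forward direction, building the two handlebodies from unlink disks for the reverse direction, and Reidemeister--Singer to pass between Heegaard diagrams. One genuine technical point is glossed over in your ($\Leftarrow$) sketch. The disjoint disks in $S^3$ bounded by the $0$-framed unlink $e(h_\ell(\lbar R))$ will in general meet $e(\lbar\Sigma)$ in interior circles, and once you push the disks into $D^4_-$ and attach thickened $2$-handles to the collar, the resulting boundary $2$-sphere is not a priori unknotted in the $4$-ball, so ``the top surface becomes a sphere bounding a ball'' requires justification. The paper resolves this by working level-by-level in $S^3\times(-\infty,0]$: one first performs ambient $1$-surgeries on \emph{innermost} circles of $\operatorname{int}(\lbar{\mathcal D})\cap e(\lbar\Sigma)$ and only afterwards on the components of $\lbar R$ themselves; every $2$-sphere produced along the way then lies in a level $S^3$ and visibly bounds a $3$-ball there, and the full trace (with these balls filled in) is diffeomorphic rel boundary to $\mathrm{HB}_\ell$.

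Two minor remarks on ($\Rightarrow$). The paper does not cancel down to a single index-$0$ point---the Ambient Morse Theorem it invokes yields ordering but not cancellation---and instead simply discards from the ascending-disk boundaries those circles coming from $1$-handles that merely connect distinct components of the sublevel set; what remains is the GL $\mathcal L_-$. And your worry (ii) is unnecessary: the local Morse model at an index-$1$ critical point shows directly that each belt circle is born with surface-framing $0$, so no Dehn-twist correction is required.
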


Before giving the proof, we consider some simple examples to fix concepts. The ``round'' embedding $S^3 \subset \R^4$ has only two critical points (w.r.t., say, the 4th coordinate); the Heegaard surface is $S^2$. There is not much to say about Lagrangians. Next, consider $S^3 \hookrightarrow \R^4$ with one critical point of each index $=0$, 1, 2, and 3. The corresponding Heegaard decomposition is genus 1, see Figure \ref{fig:one-critical-pt}. Let's call the boundary of the co-core of the 1-handle the meridian $x$, and the attaching circle $p$ of the 2-handle the longitude. In the language of Heegaard decompositions, the ``red'' GL is $\{x\}$ and the ``blue'' GL is $\{p\}$. Examining the Morse function tells us that both are 0-framed and unknotted (again, the framing is by the normal to the surface $S^3 \cap \R^3 \times 0 =: \Sigma_1$, living in the middle ``0,'' level of $\R^4$). For an embedded torus in $\R^3 \times 0$ exactly two classes of scc are 0-framed, so they must be $x$ and $p$. Since $p$ is unknotted, the picture in the middle level is completely standard.

\begin{figure}[ht]
	\centering
	\begin{tikzpicture}[scale=0.8]
		\draw (0,0) ellipse (6 and 3);
		\node at (6.4,0) {$\Sigma_1$};
		\draw (0,0) ellipse (4 and 1.75);
		\draw (2.2,0.15) arc (0:-180:2.2 and 0.6);
		\draw (2,-0.1) arc (0:180:2 and 0.6);
		\node at (4.4,0) {$p$};
	
		\draw (0,3) arc (90:-90:0.4 and 1.25);
		\draw[dashed] (0,3) arc (90:270:0.4 and 1.25);
		\node at (0.7,2.3) {$x$};
		\node at (-6.3,0) {$\hphantom{m}$};
	\end{tikzpicture}
	\caption{}\label{fig:one-critical-pt}
\end{figure}

In $(x,p)$-basis the Heegaard gluing map induced by the height function of the embedded $S^3$ is
\[
	f = \text{rotation} = \begin{array}[b]{rccc}
		\multicolumn{1}{l}{} & x & p \\
		\multicolumn{1}{r|}{x} & 0 & \multicolumn{1}{c|}{1} \\
		\multicolumn{1}{r|}{p} & -1 & \multicolumn{1}{c|}{0} \\
	\end{array}\ ,
\]
a rotation.

But suppose, instead, that starting with $\Sigma_1 = \de H_1$, we asked if $M_f$ embeds in $\R^4$ for $f$ the shear map
\[
	f = \text{shear} = \begin{array}[b]{rccc}
		\multicolumn{1}{l}{} & x & p \\
		\multicolumn{1}{r|}{x} & 1 & \multicolumn{1}{c|}{0} \\
		\multicolumn{1}{r|}{p} & 1 & \multicolumn{1}{c|}{1} \\
	\end{array}\ ?
\]
This map sends $x$ to $x+p$, the diagonal. The diagonal is unknotted, but its framing $=1$; there is no local model for pinching it off by an ambient 1-surgery. Does this mean we have found an obstructing to embedding $M_{\text{shear}}$? We better not have as
\begin{equation}
	M_{\text{shear}} \cong S^3 \cong M_{\text{rotation}}
\end{equation}

The puzzle is resolved by remembering that for the ``only if'' direction the two GLs need only be \emph{handlebody equivalent} to a 0-framed unlink, this being part of the indeterminacy of a $\mathcal{D}$ associated to $M$. In this example, one GL is $x$, a 0-framed unknot, and the other GL is the diagonal which \emph{becomes} a 0-framed unknot after a Dehn twist along the essential disk of $H_1$.

So we should remember MCG$(\text{HB})$ acts on the GLs, allowing them to be modified by such Dehn twists, as well as handle slides and ``half twists,'' see Figure \ref{fig:half-twists}, which together generate MCG$(\text{HB})$.

\begin{note}
	As  explained in \cite{fm11}, without half twists, Dehn-twists and handle slides only generate the index 2 subgroup $K$, below:
	\begin{equation}
		\operatorname{MCG}(\mathrm{HB}) \ra \operatorname{Out}(\operatorname{free}(g)) \xrightarrow{\operatorname{det}} \Z_2,
	\end{equation}
	$K = $ kernel of composition.
\end{note}

\begin{figure}[ht]
	\centering
	\begin{tikzpicture}
		\node at (0,1.5) {$\operatorname{MCG}(\mathrm{HB}) \ra \operatorname{Out}(\operatorname{free}(g)) \xrightarrow{\mathrm{det}} \Z_2$};
		\node at (0,0.5) {$K = $ kernel of competition};
		\node at (-3.8,-1) {\includegraphics[width=5cm]{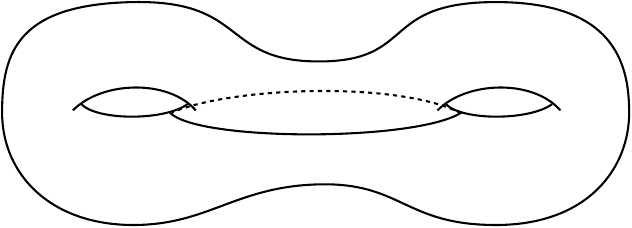}};
		\node at (0,-1) {$\xrightarrow{\text{half twist}}$};
		\node at (3.8,-1) {\includegraphics[width=5cm]{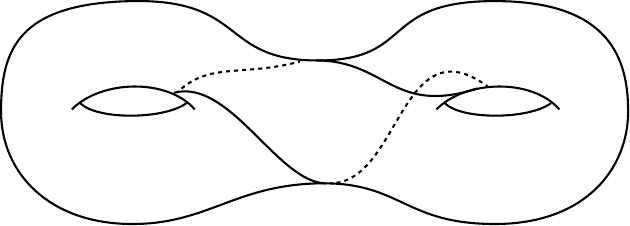}};
	
		\path[pattern=north east lines,pattern color = gray] (-3.75,-1) ellipse (1.1 and 0.15);
		\path[pattern=north east lines,pattern color = gray] (2.7,-0.85) to[out=20,in=175] (3.9,-1.55) to[out=0,in=225] (4.6,-0.85) to[out=-15,in=200] (5.15,-0.8) to[out=120,in=50] (4.6,-0.85) to[out=170,in=0] (3.8,-0.6) to[out=200,in=35] (2.7,-0.85);
	\end{tikzpicture}
	\caption{}\label{fig:half-twists}
\end{figure}

The handlebody groups are subtle, but Wajnryb found a finite representation \cite{waj98}.

To get a bit more intuition, let us draw a picture suggesting an insurmountable framing difficulty (confirmed by Hantzsche's theorem) to embedding the lens space $L_{3,1}$.

\begin{figure}[ht]
	\centering
	\begin{tikzpicture}
		\node at (0,0) {\includegraphics[scale=0.75]{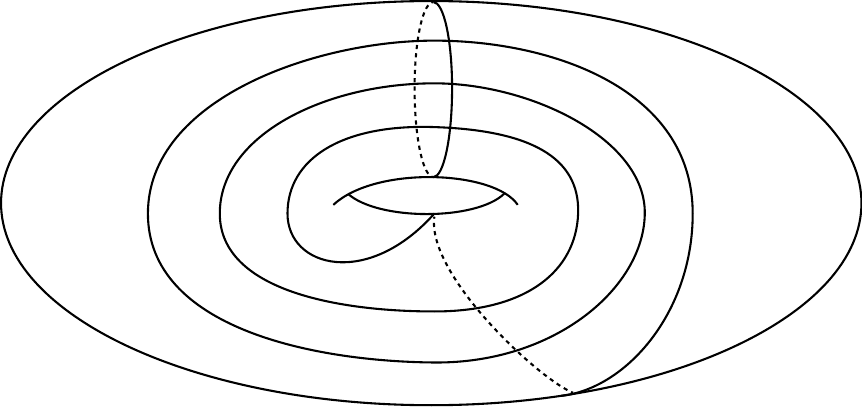}};
		\node at (-2.1,3.1) {Lagrangian $L$};
		\draw[->] (-0.8,3.1) -- (-0.1,2.8);
		\node at (2.5,3.1) {Lagrangian $f(L)$};
		\draw[->] (2.5,2.8) -- (2.2,1.9);
	\end{tikzpicture}
	\caption{}
\end{figure}

Zeeman \cite{zee65}, as part of his study of twist-spun knots, constructed explicit embeddings of all punctured $p = $ odd, lens spaces $L^-_{p,q} \hookrightarrow S^4$ and therefore embeddings $L_{p,q} \# -L_{p,q} \hookrightarrow S^4$. The associated DU Heegaard diagrams are a bit difficult to draw, but their existence, guaranteed by Theorem \ref{thm:ub-up}, will be enough to produce a partial converse to Hantzsche's theorem, our Theorem \ref{thm:enhanced-hantz}.

\begin{proof}[Proof of Theorem \ref{thm:ub-up}]
	($\Leftarrow$) We use the language of Heegaard decompositions. Start with an embedding $e: \lbar{\Sigma} \hookrightarrow S^3 \times 0$, for some bar-$k$-stabilization of $\Sigma_g$ with geometric Lagrangians $R$ and $B$ (red and blue) so that both $e(R)$ and $e(B)$ are zero framed $g+k$-component unlinks. Thus, $e(R)$ bounds $g+k$ disjoint properly embedded 2-disks $(\coprod_{i=1}^{g+k} D^2,\de) =: (\lbar{\mathcal{D}}, \de \lbar{\mathcal{D}}) \hookrightarrow (S^3 \times 0, e(\lbar{\Sigma}))$. By the framing condition we may assume the collar of $\de \lbar{D}$ is normal to $e(\lbar{\Sigma})$ and that $\lbar{\mathcal{D}} \pitchfork e(\lbar{\Sigma})$, all intersections are transverse. Indeed, since $R \subset \lbar\Sigma$ is complete, $\operatorname{int}(\lbar{\mathcal{D}}) \cap e(\lbar{\Sigma})$ consists of disjoint embedded scc in $\lbar{\mathcal{D}}$, which are either trivial in $e(\lbar{\Sigma})$ or lie in the planar domain $e(\lbar\Sigma \setminus R)$.

	Start to evolve $e(\lbar{\Sigma})$ downward in the product structure of $S^3 \times [0,-\infty)$ As it moves downward, carry out ambient 1-surgeries on innermost circles of $\operatorname{int}(\lbar{\mathcal{D}} \cap e(\lbar{\Sigma}))$ until none remain. Then, finally, do ambient 1-surgery on the components of $R$. All 1-surgeries use disks or subdisks of $\lbar{\mathcal{D}}$ as their cores, with the surgery traces built around those. What is the entire trace of those ambient surgeries? It is a copy of $(\mathrm{HB}_R \setminus \mathrm{balls})$, the canonical (punctured) handlebody $\mathrm{HB}_R$ associated to $R$ with one additional (open) 3-ball deleted for each scc of $\operatorname{int}(\mathcal{D} \cap e(\lbar{\Sigma}))$. Surgery on trivial components produce additional 2-sphere boundaries directly, and surgery on parallels to band sums $R$, produce a 2-sphere eventually, certainly by the time $R$ is surgered. Finally, these 2-spheres lie in $S^3$-levels and may be filled in by 3-balls to produce an embedding of $\mathrm{HB}_R \subset S^3 \times [0,-\infty)$.

	Similarly, we take embedded disks in $S^3 \times 0$ bounding $e(B)$ transverse to $e(\lbar{\Sigma})$ and surger all intersections while evolving $e(\lbar{\Sigma})$ upwards in the product structure of $S^3 \times [0,\infty)$. The trace of these surgeries (with additional 2-sphere surgeries to fill the resulting $S^2$-holes) will be diffeomorphic rel boundary to $(\mathrm{HB}_B,\lbar{\Sigma})$. The union of the upper and lower handlebodies is diffeomorphic to $M$.

	($\Rightarrow$) This direction requires a well-known tool, ``ambient'' Morse theory; see the appendix to \cite{af21} for a succinct exposition. From the next statement, we only need the case where $N$ is $S^3$ and $M$ a 3-manifold.

	\begin{theorem*}[Ambient Morse Theorem]
		Let $M \xrightarrow{g} N \times R$ be an embedding where $M,N$ are closed manifolds and $\dim(N) \geq \dim(M)$. First, $g$ may be perturbed by a small ambient isotopy so that $\pi_2 \circ g$ has only Morse critical points, $\pi_2: N \times R \ra R$ being the projection. Second, a further isotopy of $g$ (no longer small) may be found which preserves the set of critical points and their indices while resulting in an ``ordered'' Morse function in the sense that $\pi_2 \circ g(c) > \pi_2 \circ g(c^\pr)$ if $\operatorname{index}(c) \geq \operatorname{index}(c^\pr)$, $c$ and $c^\pr$ critical points. Finally, if $\dim(N) > \dim(M)$, the critical points of each index may have their heights ($R$-coordinates) rearranged among themselves arbitrarily. However, if $\dim(N) = \dim(M)$, this will generally \emph{not} be possible; critical points of a fixed index may nest and thus have an intrinsic ordering.
	\end{theorem*}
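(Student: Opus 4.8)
The plan is to treat the three assertions as, respectively, a genericity statement, an ambient form of Milnor's rearrangement theorem, and a general-position count that detects the codimension. Throughout write $h := \pi_2 \circ g : M \to \R$ for the height function and $n := \dim M$, and recall that since $\dim N \geq \dim M$ the embedding $g$ has positive codimension, $\dim(N \times \R) \geq \dim M + 1$, so $g(M)$ carries a genuine tubular neighborhood.

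For the first assertion I would show that an open dense set of small \emph{vertical} ambient isotopies makes $h$ Morse. Given a function $\varphi$ on $M$ of small $C^2$-norm, build an ambient isotopy supported in a tubular neighborhood of $g(M)$ that pushes each sheet of $g(M)$ in the $\R$-direction so that the new height function becomes $h + \varphi$; this is legitimate because the positive codimension of $g$ provides disjoint vertical tubes along $g(M)$ within which the $\R$-direction push can be tapered to the identity. Since Morse functions are open and dense in $C^\infty(M,\R)$ by Sard/jet transversality, a generic $\varphi$ renders $h + \varphi$ Morse, and the associated isotopy is as small as desired.

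For the second assertion, choose a gradient-like vector field $\xi$ for $h$ on $M$ and perturb it to be Morse--Smale, so that for critical points $p, q$ with $h(p) > h(q)$ the space of connecting trajectories is a manifold of dimension $\operatorname{index}(p) - \operatorname{index}(q) - 1$. Consequently, whenever an upper critical point $p$ satisfies $\operatorname{index}(p) \leq \operatorname{index}(q)$ for a lower critical point $q$ --- precisely the inversions we must remove --- no trajectory runs from $p$ to $q$, so after a further generic perturbation the closed ``membranes'' $K_p, K_q$ (closures of the ascending and descending manifolds through each point) are disjoint. The classical rearrangement theorem then interchanges $h(p)$ and $h(q)$ by a modification of $h$ supported near $K_p \cup K_q$. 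To make this ambient I would extend $\xi$ into the tubular neighborhood and realize the interchange by a vertical ambient isotopy supported in disjoint tubes around $g(K_p)$ and $g(K_q)$; the positive codimension guarantees a normal direction in which the vertical push stays an embedding and creates no new critical points. Iterating this bubble-sort over adjacent inverted pairs produces the ordered Morse function.

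For the final assertion, note that two critical points of the same index $i$ have connecting-trajectory moduli space of dimension $i - i - 1 = -1$, hence never connect in $M$; intrinsically their heights are always interchangeable. The obstruction to realizing this ambiently lives in the intermediate level set $g(M) \cap (N \times \{t\})$, where the descending sphere (dimension $i-1$) of one and the ascending sphere (dimension $n-i-1$) of the other must be slid past one another. When $\dim N > \dim M$ the codimension of $M$ in $N \times \R$ is at least two, so these spheres can be made disjoint and unlinked by general position and the vertical tubes slid freely, giving the arbitrary rearrangement; when $\dim N = \dim M$ the codimension is one and the spheres can link --- the nested-sphere configuration being the model obstruction --- so the heights cannot in general be exchanged. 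The main obstacle I anticipate is the second assertion: upgrading Milnor's abstract rearrangement to an honest ambient isotopy of $N \times \R$ requires controlling the extension of the gradient-like field into the tubular neighborhood and verifying that the vertical push remains an embedding throughout, rather than merely producing a new abstract Morse function on $M$. The transversality inputs in the first and third parts are routine by comparison.
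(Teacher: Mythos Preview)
The paper does not supply a proof of this theorem. It is quoted inside the proof of Theorem~\ref{thm:ub-up} as a ``well-known tool'' and the reader is referred to the appendix of \cite{af21} for a succinct exposition; no argument is given in the present paper. So there is nothing here to compare your proposal against.

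That said, your outline follows the expected shape: jet transversality for the genericity step, Milnor's rearrangement for the ordering step, and a codimension count to explain the dichotomy in the final clause. You correctly identify the crux as promoting the abstract rearrangement to an ambient isotopy of $N\times\R$. One point to tighten: in the codimension-one case ($\dim N=\dim M$) your ``vertical push in disjoint tubes'' for the second assertion needs more care, since moving a tube around $g(K_p)$ in the $\R$-direction can in principle sweep through other parts of $g(M)$ lying at intermediate heights, not just through $g(K_q)$. The point that saves you is that the descending sphere of $p$ and the ascending sphere of $q$ are disjoint \emph{in the intermediate level surface of $M$}, and in codimension one that level surface separates $N$ locally, so the tube can be routed on the correct side; but this deserves an explicit sentence rather than an appeal to ``positive codimension,'' which is only barely positive here. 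For the third assertion your linking/nesting picture is exactly right: in codimension one the belt spheres of equal-index critical points are codimension-one spheres in $N$ and can genuinely nest, which is the intrinsic ordering the statement warns about.
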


	Suppose $M^3$ embeds in $S^4$, or equivalently, in $S^3 \times R$. This theorem allows us to change the initial embedding by an isotopy so that the $R$-coordinate is an ordered Morse function on $M^3$. Let $0 \subset R$ be a generic level above the 0-, 1-handles (critical points) and below the 2-, 3-handles. Let us suppress the notation for this improved embedding and simply write $M^3 \hookrightarrow S^3 \times R$.

	$M^3 \cap S^3 \times 0 =: \Sigma$, a Heegaard surface for $M^3$ with $M^3 \cap S^3 \times [0,-\infty)$ and $M^3 \cap (S^3 \times [0,+\infty))$ being the lower and upper handlebodies. To find a GL for the lower HB look at the ascending manifolds of the critical points of index $=1$. These are disjointly embedded 2-disks meeting $\Sigma$ in a 1-manifold $\Gamma$ which contains a GL $\mathcal{L}_- \subset \Sigma$. Some of the circle components of $\Gamma$ come from 1-handles joining disconnected pieces of the level surfaces; these can be ignored. The remaining components constitute $\mathcal{L}_-$. The first thing one notices is that $\mathcal{L}_-$ is a slice link due to the disjointness of the ascending manifolds. But, since those ascending manifolds encounter no other critical point up to the 0-level, $\mathcal{L}_-$ is actually a 0-framed unlink. (Proof: ``Watch the movie.'' As one passes index $=1$ critical levels, small ascending circles are born, the ``belt'' circles of 1-handles, and these move isotopically with previous belt circles until a new index $=1$ level is crossed. The framing condition is evident from the local Morse model at the critical point.)

	Now turn the picture upside down to see the 2-(3-)handles become 1-(0-)handles, respectively. The same argument shows that the boundary of the descending 2-manifolds (of the original) 2-handles contain a second Lagrangian $\mathcal{L}_+ \subset \Sigma$ which is also a 0-framed unlink. The 3-manifold $M$ determined by the Heegaard digram $\mathcal{D} = (\mathcal{L}_-,\mathcal{L}_+)$ evidently lies in $S^3 \times \R \subset S^4$.
	\begin{equation}
		M^3 \cong \mathrm{HB}_{\mathcal{L}_-} \smile -\mathrm{HB}_{\mathcal{L}_+}
	\end{equation}
	where $\mathrm{HB}_{\mathcal{L}_-}$ ($\mathrm{HB}_{\mathcal{L}_+}$) are the upper (lower) handlebodies.

	The Reidemeister-Singer theorem (see \cite{lau14} for a modern exposition) says that up to bar-stabilization, a given closed 3-manifold $M$ has an isotopically unique Heegaard surface. Introducing additional cancelling $(1,2)$-handle pairs to the ambient Morse function is bar-stabilization. This unique stable Heegaard decomposition has been located with an associated DU Heegaard diagram visible in $S^3 \times 0$.
\end{proof}

\begin{note}
	Using the Reidemeister-Singer theorem, only bar-stabilization and handle slides are needed to connect initial to final HDs, not Dehn twists or $\frac{1}{2}$-Dehn twists.
\end{note}

\section{Discussion}\label{sec:discussion}
Let us pause here for an informal discussion of UP/UB and to pose a question. There are many examples of closed 3-manifolds which do/don't embed in $\R^4$. Q: What happens if we quantify over the choice of boundary handlebody $\mathcal{L}_-$? Are there $f \in \operatorname{MCG}(\Sigma_g)$, $g > 1$, so that for all GLs $\mathcal{L} \subset \Sigma_g$, $(f,\mathcal{L})$, and for all $e$, $e(\lbar{\mathcal{L}})$ and $e(f(\lbar{\mathcal{L}}_-))$ can never both be 0-framed unlinks? This is the property we have called ``$f$ is UB.'' An $f$ with this purely dynamical property, if one exists, might be easier to detect.

When $g=1$, $\operatorname{MCG}(\Sigma_1) \cong \operatorname{PSL}(2,\Z)$ according to classical work of Dehn. In this case, we may check, for example, that $f = \begin{pmatrix} -2 & -3 \\ 3 & 4 \end{pmatrix}$ obeys at least an unstable form of UB, $\text{UB}_0$. That is, no conjugate $f^\pr$ of $f$ can define a gluing map yielding $M_{f^\pr} = S^1 \times D^2 \cup_{f^\pr} S^1 \times D^2$, with $M_{f^\pr}$ embedding in $S^4$. $\abs{\operatorname{torsion}(H_1(M_{f^\pr};\Z))} =: \abs{\tau(H_1(M_{f^\pr}))} =: \tau \neq 0$ since if $f^\pr = g \circ f \circ g^{-1}$, $g \in \operatorname{MCG}(S^1 \times \de D^2)$ and $g$ takes the meridian $\ast \times D^2$ to an indivisible class $[(a,b)] \neq 0 \in H_1(S^1 \times \de D^2)$, then $\tau$ may be computed as $\operatorname{det} \begin{pmatrix} a & -2a-3b \\ b & 3a+4b \end{pmatrix} = 3(a+b)^3 \neq 0$. Since $M_{f^\pr}$ has genus 1 and 3 divides $\tau$, $M_{f^\pr}$ must be a lens space and cannot embed in $S^4$. If the $f$ is bar-stabilized, many new GLs are possible. It is no longer clear that $\tau$ must be a non-square, so we do not know if $f$ is UB.

Q: If $\te \neq \mathrm{id} \in \operatorname{SP}(2g,\Z)$, can one find a Lagrangian $\mathcal{L} \in \Z^{2g}$ so that $\abs{\operatorname{torsion}(\Z^{2g}\slash \mathcal{L} \cd \te \mathcal{L})}$ is non-square? As we will see in section \ref{sec:weak-unlink}, this integer is the order of torsion$(H_1(M_f;\Z))$ for any gluing map $f$ inducing $\te$ on $H_1(\de \mathrm{HB}_\mathcal{L};\Z)$, and by Hantzsche's theorem will be a square if $M_f$ embeds in $S^4$.

If the answer to Q is ``yes,'' only an $f \in I_g := \operatorname{ker}(\mathrm{MCG}_g) \ra \operatorname{SP}(2g,\Z)$ ($I_g$ being the Torrelli group from line \eqref{eq:epimorphism}) could be UP. To give another unstable example for $g=1$, let $\te = \begin{vmatrix}
	0 & 1 \\ -1 & 0
\end{vmatrix}$ be a rotation. Let $\mathcal{L}$ be the scc $(a,b)$ ($\operatorname{gcd}(a,b) = 1$) then $\mathcal{L} = (a,b)$, $\te(\mathcal{L}) = (b,-a)$, $\abs{\operatorname{torsion}(\Z^{2g}\slash \mathcal{L} \cd \te \mathcal{L})} = \abs{\operatorname{det}\begin{pmatrix} a & b \\ b & -a\end{pmatrix}} = a^2 + b^2$, which except for Pythagorean triples is not a square.

Intuitively, UB appears to the author likely to be the generic situation for $g>1$, but it is difficult to judge: unlinks are very rare, but considerable freedom exists in the allowed choices:
\begin{enumerate}
	\item Bar-stabilization
	\item Choosing the embedding $e: \Sigma \hookrightarrow S^3$, and
	\item Choosing the GL $\mathcal{L}$
	\item The two handlebody groups applied to $\mathcal{L}$ and $f(\mathcal{L})$
\end{enumerate}

Regarding (4), consider just the effect of handle slides within the handlebody group. The example below shows that handle slides applies to a GL $\mathcal{L}$ may transform $\mathcal{L}$ back and forth between an unlink and certain ribbon links.

\begin{figure}[ht]
	\centering
	\begin{tikzpicture}
		\node at (0,0) {\includegraphics[scale=0.5]{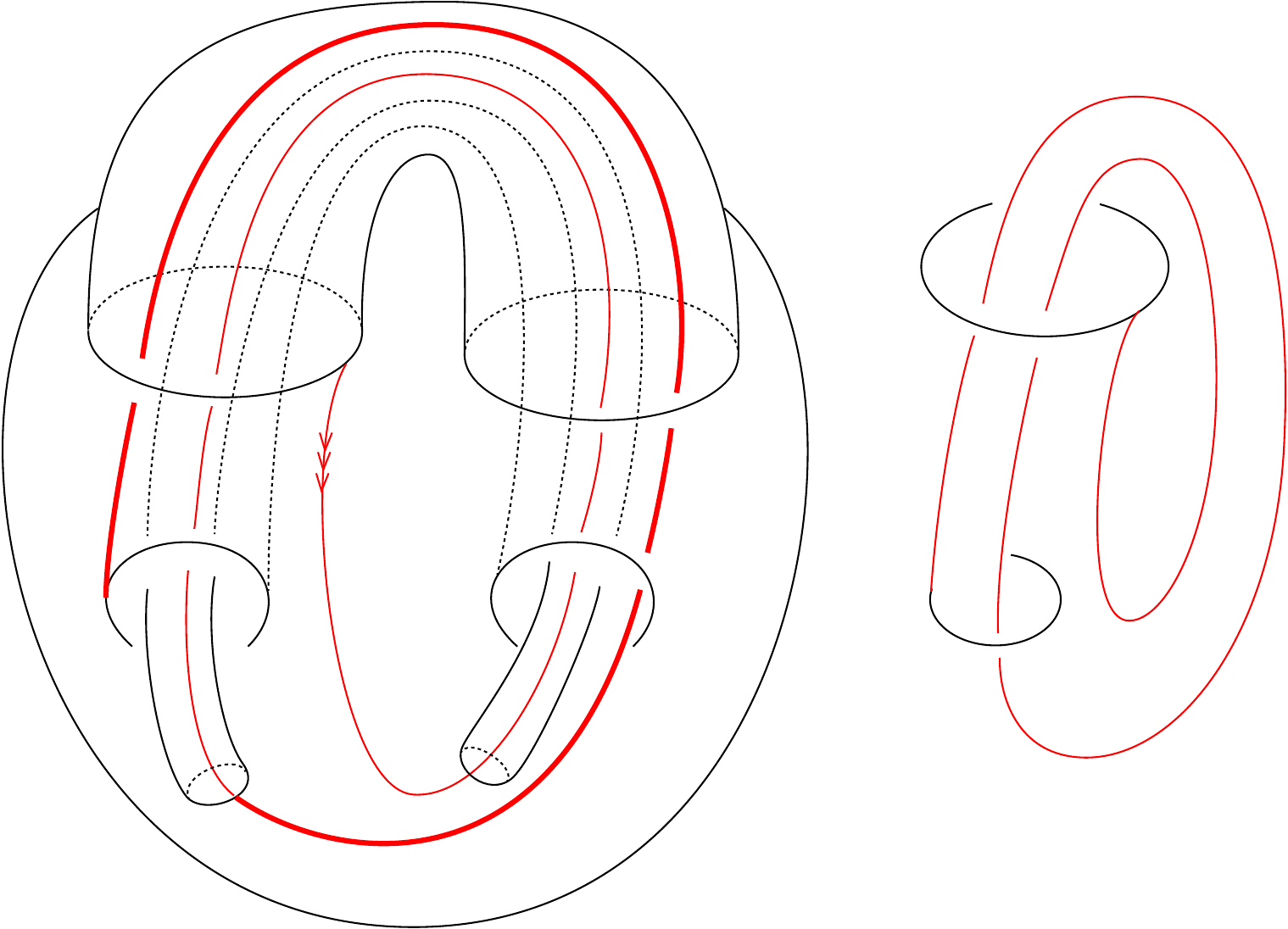}};
		\node at (-5.8,0.8) {$\mathcal{L}_3$};
		\node at (-5.75,-1.5) {$\mathcal{L}_2$};
		\node at (-3.5,-3.2) {$\mathcal{L}_1$};
		\node at (-2.7,-0.8) {$\alpha$};
		\draw[->] (-2.8,-1) -- (-3,-1.3);

		\node at (2.5,1.5) {$\mathcal{L}_3$};
		\node at (2.5,-1.5) {$\mathcal{L}_2$};
		\node at (6.8,0) {$\alpha$};
		\draw[->] (6.7,-0.2) -- (6.5,-0.5);
		\node at (4.5,-3.3) {ribbon};
	\end{tikzpicture}
	\caption{If $\mathcal{L} = (\mathcal{L}_1,\mathcal{L}_2,\mathcal{L}_3)$, an unknotted GL on the drawn genus 3 surface is modified by sliding $\mathcal{L}_3$ over $\mathcal{L}_2$ along the red arc $\alpha$, $\mathcal{L}_3$ becomes a ribbon knot. Note that it is necessary beforehand to slide $\mathcal{L}_1$ over $\mathcal{L}_2$ along the hatched portion of $\alpha$, transforming $\mathcal{L}_1$ to $\mathcal{L}_1^\pr$. So $\mathcal{L}^\pr := (\mathcal{L}_1^\pr, \mathcal{L}_2^\pr = \mathcal{L}_2, \mathcal{L}_3^\pr)$ is not an unlink but a ribbon link. Of course, the reverse slides would convert the ribbon $\mathcal{L}^\pr$ to $\mathcal{L}$.}
\end{figure}

The problem of how common UB is has a $0 \cd \infty$ character: unlinks are rare, but our ``gauge freedom'' is great. However, we can whittle away a bit at the ``$\infty$'' by considering $\hat{\phantom{\_}}$-stabilization, introduced earlier. Theorem \ref{thm:hat-ub-up} produces a version of the UP/UB question equivalent to the problem of $(S^1 \times S^2)$-stabilized embeddings into $\R^4$ where the embedding $e: \Sigma_g \hookrightarrow \R^3 \times 0$ may be fixed to be the unique Heegaard surface for any fixed genus.

\begin{definition}
	A Heegaard diagram $\mathcal{D}$ is $\widehat{\mathrm{DU}}$ if, for some $\hat{\hphantom{\_}}$-stabilization $\hat{\mathcal{D}}$ there is a Heegaard embedding $e: \hat{\Sigma} \hookrightarrow S^3$ so that $\hat{R}$ and $\hat{B}$ may (separately) to 0-framed unlinks $e(h_\ell(\hat{R}))$ and $e(h_u(\hat{B}))$ for suitable $h_\ell \in \mathrm{HG}_\ell$ and $h_u \in \mathrm{HG}_u$.
\end{definition}

\begin{definition}
	$f \in \operatorname{MCG}(\Sigma_g)$ is $\widehat{\mathrm{UP}}$ iff there exists a GL $\hat{\mathcal{L}} \subset \hat{\Sigma}_g$, a $\hat{\phantom{\_}}$-stabilization of $(\Sigma_g,\mathcal{L})$ for any $k,k^\pr > 0$, so that for the unique Heegaard embedding $e: \widehat{\Sigma} \hookrightarrow S^3$, both $e(\widehat{\mathcal{L}})$ and $e(f(\widehat{\mathcal{L}}))$ are 0-framed unlinks. If $(f,\mathcal{L})$ is not $\widehat{\mathrm{UP}}$, we say it is $\widehat{\mathrm{UB}}$.
\end{definition}

Now with $M_f$, defined as before, we have:

\begin{theorem}\label{thm:hat-ub-up}
	Let $M$ be a closed 3-manifold with Heegaard diagram $\mathcal{D}$, then $M \#_{k^\pr} (S^1 \times S^2)$ for some $k^\pr$ (smoothly) embeds in $\R^4$ iff $\mathcal{D}$ is $\widehat{\mathrm{DU}}$.	In terms of a gluing map $f$, $M_{f,\mathcal{L}}$ is $\widehat{\mathrm{UP}}$ iff for some $k^\pr \geq 0$, $M_{f,\lbar{\mathcal{L}}} \# (\#_{k^\pr} S^1 \times S^2)$ smoothly embeds in $\R^4$; this embedding may be assumed Heegaard \cite{af13}, i.e.\ at all generic levels, the level surfaces are Heegaard.
\end{theorem}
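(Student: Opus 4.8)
The plan is to reduce Theorem~\ref{thm:hat-ub-up} to the already-proved Theorem~\ref{thm:ub-up} by showing that the $\hat{\phantom{\_}}$-stabilization operation on Heegaard diagrams corresponds \emph{exactly} to the connected-sum operation $M \mapsto M \# (\#_{k^\pr} S^1 \times S^2)$ on the underlying 3-manifold, while simultaneously rigidifying the embedding $e$ of the Heegaard surface. Recall from line~\eqref{eq:stabilize-manifold} that $M_{\widehat{f}} \cong M_f \# (\#_{k^\pr} S^1 \times S^2)$; this is the algebraic heart of the correspondence. So the first move is to observe that ``$\mathcal{D}$ is $\widehat{\mathrm{DU}}$'' is, almost by definition, the condition ``the $\hat{\phantom{\_}}$-stabilized diagram $\hat{\mathcal{D}}$ is DU'' — except that the $\widehat{\mathrm{DU}}$ definition demands the embedding $e$ be \emph{the} unique Heegaard embedding of $\hat\Sigma$ into $S^3$ for the ambient genus, whereas DU quantifies over all embeddings $e$. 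Thus the one substantive thing to prove is that this apparent loss of freedom in choosing $e$ costs nothing, provided we are allowed to pay in extra $S^1 \times S^2$ summands.

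\emph{First} I would run the easy direction. If $\mathcal{D}$ is $\widehat{\mathrm{DU}}$, then by definition there is a $\hat{\phantom{\_}}$-stabilization $\hat{\mathcal{D}}$ with a Heegaard embedding $e: \hat\Sigma \hookrightarrow S^3$ carrying $\hat R$ and $\hat B$ (after applying suitable handlebody-group elements $h_\ell, h_u$) to $0$-framed unlinks. This is precisely the DU condition for the diagram $\hat{\mathcal{D}}$, so Theorem~\ref{thm:ub-up} applied to $\hat{\mathcal{D}}$ gives an embedding of the associated manifold $M_{\widehat{f}}$ into $\R^4$. By \eqref{eq:stabilize-manifold} that manifold is $M \# (\#_{k^\pr} S^1 \times S^2)$, which is what we wanted. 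The handlebody-group freedom $h_\ell, h_u$ is exactly the ``handlebody equivalent'' slack that the ($\Leftarrow$) direction of Theorem~\ref{thm:ub-up} already accommodates, so no new work is needed here.

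\emph{Second}, for the converse, suppose $M \# (\#_{k^\pr} S^1 \times S^2)$ embeds in $\R^4$. Apply the ($\Rightarrow$) direction of Theorem~\ref{thm:ub-up}: after an ambient-Morse-theoretic isotopy, we extract a DU Heegaard diagram for $M \# (\#_{k^\pr} S^1 \times S^2)$ in some level $S^3 \times 0$, realized by \emph{some} embedding $e'$ of \emph{some} bar-stabilized Heegaard surface. The crux is then to upgrade this arbitrary $e'$ to the \emph{unique} Heegaard embedding demanded by $\widehat{\mathrm{DU}}$. Here I would invoke the fact, cited in the statement as \cite{af13}, that an embedding of a stabilized manifold into $\R^4$ may be taken to be Heegaard at every generic level — i.e.\ each level surface is itself a Heegaard surface for the slice it bounds. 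Combined with the uniqueness (up to isotopy, after stabilization) of the Heegaard surface of a fixed genus in $S^3$ via Reidemeister--Singer \cite{lau14}, any two Heegaard embeddings of a surface of the same genus into $S^3$ differ by an ambient isotopy of $S^3$; such an isotopy can be absorbed into the choice of $R, B$ and the handlebody-group elements. This is the step I expect to be the \textbf{main obstacle}: one must show that passing from the free embedding $e'$ to the standard Heegaard $e$ introduces only $\hat{\phantom{\_}}$-type stabilizations (which add matched meridians to both $\lbar R$ and $\lbar B$, hence extra $S^1 \times S^2$ summands) and handle slides, never genuine bar-stabilizations that would change the embedding's knotting in an uncontrolled way.

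\emph{Finally}, I would assemble the gluing-map reformulation. The manifold statement ``$\mathcal{D}$ is $\widehat{\mathrm{DU}}$ iff $M \# (\#_{k^\pr} S^1\times S^2) \hookrightarrow \R^4$'' translates verbatim to ``$(f,\mathcal{L})$ is $\widehat{\mathrm{UP}}$ iff $M_{f,\lbar{\mathcal{L}}} \# (\#_{k^\pr} S^1 \times S^2) \hookrightarrow \R^4$'' once one notes that a diagram $\mathcal{D}$ and a pair $(f,\mathcal{L})$ determine each other, with $\hat{\phantom{\_}}$-stabilization of the diagram matching $\hat{\phantom{\_}}$-stabilization of $f$ via the identity extension over $\Sigma_{k^\pr}^-$ (the definition of $\widehat{f}$). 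The ``this embedding may be assumed Heegaard'' clause is then exactly the content of \cite{af13} that we used above, closing the loop.
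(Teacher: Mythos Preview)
Your outline is correct and follows the same architecture as the paper: the ``if'' direction is immediate from Theorem~\ref{thm:ub-up} plus line~\eqref{eq:stabilize-manifold}, and the ``only if'' direction is Theorem~\ref{thm:ub-up}'s ambient Morse argument followed by a step that unknots the mid-level surface in $S^3$ at the cost of $\hat{\phantom{\_}}$-stabilization. You have correctly isolated that unknotting step as the ``main obstacle.''

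The difference is in how that obstacle is discharged. You propose to invoke \cite{af13} as a black box (``an embedding of a stabilized manifold may be taken Heegaard at every level'') together with Reidemeister--Singer. The paper instead proves and uses a short, self-contained lemma: \emph{any embedded closed surface $\Sigma_g \subset S^3$ can be converted to a Heegaard surface by finitely many ambient $0$-surgeries} (delete the $1$-handles of one complementary region to make it a handlebody, then add vertical arcs at crossings of a planar projection of its spine to make the other side a handlebody). Each such $0$-surgery adds a tube whose meridian is appended to \emph{both} $R$ and $B$, which is precisely $\hat{\phantom{\_}}$-stabilization; the unlink property of the original GLs is preserved since the handlebody automorphism is merely $\hat{\phantom{\_}}$-stabilized. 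Reidemeister--Singer is then applied only at the very end to match the resulting Heegaard surface with bar-stabilizations of the given $\mathcal{D}$. Your appeal to \cite{af13} is not wrong---that reference does contain the needed result---but the paper's lemma is the concrete content behind it, and supplying it makes the proof independent of \cite{af13} rather than circular with the citation already in the theorem statement.
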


\begin{proof}
	The ``if'' direction (in both versions) is the same as for Theorem \ref{thm:ub-up} after we note that $\hat{\phantom{\_}}$-stabilization has added the additional $k^\pr$ $S^1 \times S^2$ summands.

	The ``only if'' direction starts as with Theorem \ref{thm:ub-up}, by using ambient Morse theory to find, for an embedded $M_{f,\mathcal{L}}$, a mid-level Heegaard surface $\Sigma_g \subset S^3 \times 0$ between the two glued handlebodies $\mathrm{HB}_- \subset S^3 \times [0,-\infty)$ and $\mathrm{HB}_+ \subset S^3 \times [0,+\infty)$. At this point, the inclusion $\Sigma_g \subset S^3 \times 0$ may well be knotted, i.e.\ not Heegaard.

	The next lemma is well known.

	\begin{lemma}\label{lm:finite-0}
		Let $\Sigma_g \subset S^3$ be an embedded closed surface of genus $g$ in the 3-sphere. There exists a finite number of ambient 0-surgeries converting $\Sigma_g$ to a Heegaard surface $\Sigma$.
	\end{lemma}

	\begin{definition}
		An ambient 0-surgery is defined by a properly embedded arc $\alpha$ meeting $\Sigma$ exactly at its endpoints. The surgery consists of deleting a pair of disks at the endpoints and gluing in a tube around the arc. For a surface in $S^3$, such surgeries can take place on both sides.
	\end{definition}

	\begin{proof}[Proof of Lemma \ref{lm:finite-0}]
		Let $X$ denote one of the closed complementary regions of $\Sigma$ in $S^3$. $X$ has some handlebody structure rel $\Sigma$. Deleting the 1-handles (and taking closure) \emph{is} ambient 0-surgery. What is left of $X$ is a handlebody in the usual 3D topology sense, a union---in the other direction---of 0- and 1-handles.

		We are halfway there. $\Sigma$ has been surgered so it bounds a HB on one side, but the spine of this HB may be knotted in $S^3$. However, any graph may be unknotted, i.e.\ its complement made into a HB by adding some arcs. For example, looking at a planar projection, akin to a knot diagram, it suffices to add a vertical segment to the graph joining the two pre-images of every crossing point. These vertical arcs define the final set of ambient 0-surgeries (now on the other side) required to ensure that both closed complementary pieces of $\Sigma$ in $S^3$ are handlebodies. We have converted $\Sigma$ into a Heegaard surface.
	\end{proof}

	Using Lemma \ref{lm:finite-0}, stabilize $e(\Sigma_g)$ to a Heegaard surface $\widehat{\Sigma} := \Sigma_{g+k^\pr}$ by ambient 0-surgery in $S^3 \times 0$. Correspondingly, stabilize $\mathcal{L} \subset e(\Sigma_g)$ to $\widehat{\mathcal{L}} \subset \widehat{\Sigma}$, now dropping the notation $e$ for the embedding. This is done by choosing $\widehat{\mathcal{L}} \setminus \mathcal{L}$ to be the set of meridians of the new tubes. As $\hat{\hphantom{\_}}$-stabilization requires, the extension of $f$ is the identity on $\widehat{\mathcal{L}} \setminus \mathcal{L}$.

	The property that $\mathcal{L}$ ($f(\mathcal{L})$) is handlebody equivalent to an unlink is of course preserved when passing to $\widehat{\mathcal{L}}$ ($f(\widehat{\mathcal{L}})$) as any HB automorphism resulting in an unlink is merely $\hat{\hphantom{\_}}$-stabilized and still results in an unlink.

	Thus, the embedding of $M_f$ is now stabilized to an embedding of $M_{\hat{f}}$, where the property $\widehat{\mathrm{UP}}$ is now visible in the mid-level 3-sphere, $S^3 \times 0$. Line \eqref{eq:stabilize-manifold} identifies $M_{\hat{f}}$ with $M_f \# (\#_{k^\pr} S^1 \times S^2)$. To complete the proof, just as in Theorem \ref{thm:ub-up}, the Reidemeister-Singer theorem \cite{lau14} is applied to identify input and output Heegaard surfaces, possibly requiring an additional bar-stabilization, which does not affect the topology of the embedded 3-manifold.
\end{proof}

The ordered ambient Morse function on $M$ is close to the concept of a 4-section, and becomes identical in the $\hat{\hphantom{\_}}$-stabilized context. Reproducing Definition 2.1 of \cite{in20}:
\begin{definition}
	Let $X$ be a smooth, orientable, closed, connected 4-manifold. An \emph{n-section}, or \emph{multisection}, of $X$ is a decomposition $X = X_1 \cup \cdots \cup X_n$ such that:
	\begin{enumerate}
		\item $X_i \cong \#^{k_i} S^1 \times B^3$;
		\item $X_1 \cap \cdots \cap X_n = \Sigma_g$, a closed orientable surface of genus $g$;
		\item $X_i \cap X_j = H_{i,j}$ is a 3-dimensional handlebody if $\abs{i-j} = 1$ and $X_i \cap X_j = \Sigma_g$ if $\abs{i-j} > 1$, where $i,j$ are treated cyclically as elements of $\Z_n$;
		\item $\de X_i \cong \#^{k_i} S^1 \times S^2$ has a Heegaard splitting given by $H_{(i-1),i} \cup_\Sigma H_{i,(i+1)}$.
	\end{enumerate}
\end{definition}

In \cite{af13} we defined a \emph{Heegaard embedding} of a closed $M^3$ in $S^4$ to be an embedding so that when considered to lie in $S^3 \times \R \cong S^4 \setminus (\text{2 pts.})$ the $\R$-coordinate is ordered Morse and in every generic level $t$, $M^3 \cap (S^3 \times t) \subset S^3 \times t$ is a Heegaard surface. We proved (Theorem 4.1 \cite{af13}) that every $M^3 \hookrightarrow S^4$ with a unique local min (or a unique local max) is isotopic to a Heegaard embedding and gave examples (section 2 \cite{af13}) of embedding $M^3 \hookrightarrow S^4$ having multiple local mins and maxes which cannot be reduced by any isotopy.

In modern language, Heegaard embedding is a 4-section with the four 3D handlebodies being, in cyclic order: upper $\operatorname{HB}(M^3)$, outer $\operatorname{HB}(S^3)$, lower $\operatorname{HB}(M^3)$, and inner $\operatorname{HB}(S^3)$. In the same paper we show that after $\hat{\hphantom{\_}}$-stabilization, any embedding $M^3 \hookrightarrow S^4$ can be converted to $M^3 \# (\#_k S^1 \times S^2) \hookrightarrow S^4$ which is isotopic to a Heegaard embedding (a 4-section). For some embeddings $M \hookrightarrow S^4$ we show there is no isotopy to a Heegaard embedding, but it is an \emph{open question}: Given $M^3 \overset{f}{\hookrightarrow} S^4$, can one find a new embedding $M^3 \overset{f^\pr}{\hookrightarrow} S^4$ which is Heegaard (a 4-section), without any stabilization? But even without knowing if this refinment is possible, the ordered-Morse embedding we have constructed allows the enhancement of Hantzsche's theorem (see section \ref{sec:weak-unlink}).

\section{Linking Forms and Further Embedding / Non-embedding Examples}\label{sec:examples}

Before turning to Hantzsche's theorem, consider an even more elementary obstruction to a closed 3-manifold $M$ embedding in $S^4$.

\begin{fact}\label{fact:dne}
	$M$ does not embed in $S^4$ if $M$ contains a closed embedded surface $S$ so that $\langle S \rangle \neq 0 \in \Omega_2 \cong \Z_2$, the unoriented bordism group in dimension 2.
\end{fact}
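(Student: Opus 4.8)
The plan is to argue by contradiction: assume $M$ embeds in $S^4$ and extract a normal–Euler–number constraint on $S$ that forces $\chi(S)$ to be even, contradicting $\langle S\rangle\neq 0\in\Omega_2\cong\Z_2$. Here I use that the unoriented bordism class of a closed surface is detected by $\chi(S)\bmod 2$, equivalently by the Stiefel–Whitney number $w_1(S)^2[S]=w_2(S)[S]$; the generator $\mathbb{RP}^2$ has $\chi=1$, so $\langle S\rangle\neq 0$ means precisely that $\chi(S)$ is odd.

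First I would record the two-sidedness of $M$. Being a closed orientable hypersurface in the orientable manifold $S^4$, $M$ has orientable, hence trivial, normal line bundle $\nu_{M/S^4}$, so it admits a product collar $M\times(-1,1)\hookrightarrow S^4$. (Equivalently, by Alexander duality $\tld{H}_0(S^4\setminus M)\cong\tld{H}^3(M)\cong\Z$, so $M$ separates $S^4$.) Then from $S\subset M\subset S^4$ I would use the splitting
\[
	\nu_{S/S^4}\;\cong\;\nu_{S/M}\oplus\bigl(\nu_{M/S^4}\big\vert_S\bigr)\;\cong\;\nu_{S/M}\oplus\underline{\R},
\]
in which the last summand is the trivial collar direction. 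Its nowhere-zero section $\de_t$ is a nowhere-zero section of the rank-$2$ bundle $\nu_{S/S^4}$, so the normal Euler number vanishes: $e(\nu_{S/S^4})=0$.

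Next I would invoke the Whitney–Massey congruence for surfaces in the $4$-sphere: any closed surface $S\hookrightarrow S^4$ satisfies $e(\nu_{S/S^4})\equiv 2\chi(S)\pmod 4$. (This is consistent with the known embeddings: for $\mathbb{RP}^2\hookrightarrow S^4$ it recovers $e=\pm 2$ against $2\chi=2$, and for orientable $S$ it reads $0\equiv 0$.) Combined with $e=0$ from the previous step, this yields $2\chi(S)\equiv 0\pmod 4$, i.e.\ $\chi(S)$ is even, so $\langle S\rangle=0\in\Omega_2$, contradicting the hypothesis.

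The hard part is this last step: the \emph{integral} congruence is genuinely needed, and a naive mod-$2$ Stiefel–Whitney computation will not suffice. Indeed, applying Whitney duality to $w(S^4)\big\vert_S=1$ gives $w_2(\nu_{S/S^4})[S]=\chi(S)+w_1(S)\,w_1(\nu_{S/S^4})[S]\bmod 2$, and two-sidedness forces $w_1(\nu_{S/S^4})=w_1(\nu_{S/M})=w_1(S)$, so the right-hand side collapses to $\chi(S)+w_1(S)^2[S]=\chi(S)+\chi(S)\equiv 0$ by the Wu relation $w_1^2=w_2$ on a surface. Thus the mod-$2$ identity is vacuous, and the argument must use the lift of $e$ to $\Z$ supplied by Massey's theorem (or an equivalent Guillou–Marin type congruence, noting that $H_2(S^4;\Z_2)=0$ makes every surface $\Z_2$-trivial). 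Citing this classical codimension-$2$ result cleanly, and checking that its sign and framing conventions match the geometric computation $e=0$ above, is where the care will be required.
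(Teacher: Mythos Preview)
Your argument is correct and is essentially the same as the paper's: both use the normal-bundle splitting $\nu_{S/S^4}\cong\nu_{S/M}\oplus\underline{\R}$ coming from two-sidedness of $M\subset S^4$ to force $e(\nu_{S/S^4})=0$, and then invoke Whitney's congruence that a non-bounding surface in $S^4$ has twisted normal Euler class $\equiv 2\pmod 4$ to obtain a contradiction. Your write-up is more explicit (in particular, the remark explaining why the mod-$2$ Stiefel--Whitney computation collapses and the integral lift is genuinely needed is a nice addition not present in the paper).
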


\begin{proof}
	The non-bounding surfaces are those with odd Euler characteristic, $\#_{i=1}^{\text{odd}} RP_i^2$; the rest bound. It is a classical result of Whitney \cite{whit40} that if a non-bounding $S$ is embedded in $S^4$ its normal bundle has twisted Euler class $\equiv 2$ mod 4. In particular, the normal bundle is \emph{not} a line bundle stabilized by a trivial summand. But, if there were an embedding $M \subset S^4$, then the normal bundle $\cup_{S \hookrightarrow S^4}$ of $S \subset M \subset S^4$ would have precisely that structure.
\end{proof}

It is convenient to have an algebraic version of this obstruction. Any non-bounding $S \subset M$ determines a class $[S] \neq 0 \in H_2(M; \Z_2) \cong H^1(M; \Z_2)$. Such elements $[S]$ are precisely the classes which cube to the generator of $H^3(M;\Z_2)$, or, equivalently, have triple intersection $=1$. To see this, consider the diagram for the classifying map:
\begin{figure}[ht]
	\centering
	\begin{tikzpicture}
		\node at (0.1,0) {$M \longrightarrow RP^3 \lhook\joinrel\longrightarrow RP^\infty = K(\Z_2,1)$};
		\node at (-1.7,-1.2) {$S \longrightarrow RP^2$};
		\node[rotate=90] at (-2.4,-0.6) {$\hookrightarrow$};
		\node[rotate=90] at (-1.2,-0.6) {$\hookrightarrow$};
		\node at (-2.45,-2.4) {$C := S \pitchfork S \longrightarrow RP^1$};
		\node[rotate=90] at (-2.4,-1.8) {$\hookrightarrow$};
		\node[rotate=90] at (-1.2,-1.8) {$\hookrightarrow$};
	\end{tikzpicture}
	\caption{}
\end{figure}

The ``double curve'' $C = S \pitchfork S$ is always P.D.\ to $\omega_1(\tau_s)$ (since $\nu_{C \hookrightarrow S} \cong \nu_{S \hookrightarrow M}\vert_C$), and $C$ reverses orientation on $S$ iff $S$ is non-bounding. But $C$ reverses orientation on $S$ iff $S \cd S \cd S = 1$ (mod 2). Let us define the map $\te: H_2(M; \Z) \ra \Z_2$ given by a triple product, $\te([S]) = S \cd S \cd S$ mod 2. \qed

\begin{fact}
	$\te: H_2(M; \Z_2) \ra \Z_2$ sends $[S]$ to 0 (1) iff $\langle S \rangle = 0\ (=1) \in \Omega_2$. So, by Fact \ref{fact:dne}, $\te \equiv 0$ if $M$ embeds in $S^4$. Furthermore, $\te$ is a group homomorphism.
\end{fact}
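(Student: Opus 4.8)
The plan is to handle all three assertions through the cup-product reformulation of $\te$. Since $M$ is a union of two oriented handlebodies it is closed and oriented, so Poincar\'e duality gives $H_2(M;\Z_2)\cong H^1(M;\Z_2)$; write $\alpha=\mathrm{PD}_{\Z_2}([S])\in H^1(M;\Z_2)$. The mod-$2$ triple self-intersection is then the triple cup product, $\te([S])=\langle\alpha\cup\alpha\cup\alpha,[M]\rangle$, where $[M]\in H_3(M;\Z_2)$ is the fundamental class. This expression is manifestly an invariant of the homology class, so $\te$ is well defined on $H_2(M;\Z_2)$ and factors the integral triple product through $\Z_2$-coefficients.

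First I would settle the equivalence $\te([S])=0\ (=1)\iff\langle S\rangle=0\ (=1)$, which is essentially the content of the preceding paragraph recast in characteristic classes. Restricting $\alpha$ to a transverse embedded representative $S$ identifies $\alpha|_S\in H^1(S;\Z_2)$ with $w_1(\nu_{S\hookrightarrow M})$ (the self-intersection curve $C=S\pitchfork S$ is $\mathrm{PD}$ in $S$ to this class), and since $M$ is orientable the splitting $TM|_S\cong TS\oplus\nu$ forces $w_1(\nu)=w_1(TS)$. Hence $\te([S])=\langle\alpha^3,[M]\rangle=\langle(\alpha|_S)^2,[S]\rangle=\langle w_1(S)^2,[S]\rangle$. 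The Stiefel--Whitney number $w_1^2[S]$ is precisely the isomorphism $\Omega_2\xrightarrow{\ \sim\ }\Z_2$, nonzero exactly on the non-bounding surfaces $\#^{\mathrm{odd}}RP^2$, so $\te([S])=\langle S\rangle$, matching the geometric statement that $C$ reverses orientation on $S$ iff $S$ is non-bounding.

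The second assertion is then immediate: if $M$ embeds in $S^4$, Fact~\ref{fact:dne} forbids a non-bounding embedded surface, so $\langle S\rangle=0$ and hence $\te([S])=0$ for every $[S]$, i.e.\ $\te\equiv 0$.

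The main work is the homomorphism property. Writing $\mu(a,b,c):=\langle a\cup b\cup c,[M]\rangle$ for the symmetric trilinear form and expanding the cube of $\alpha+\beta$, the binomial coefficient $3$ reduces to $1$ mod $2$, giving $\te([S_\alpha]+[S_\beta])=\te([S_\alpha])+\te([S_\beta])+\langle\alpha^2\beta+\alpha\beta^2,[M]\rangle$, so additivity is equivalent to the vanishing of the cross term $\langle\alpha^2\beta+\alpha\beta^2,[M]\rangle$ for all $\alpha,\beta\in H^1(M;\Z_2)$. By the Cartan formula together with $\mathrm{Sq}^1(\gamma)=\gamma^2$ on degree-one classes, this cross term equals $\langle\mathrm{Sq}^1(\alpha\cup\beta),[M]\rangle$. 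The hard and decisive step is to see this is zero: by Wu's theorem $\langle\mathrm{Sq}^1(x),[M]\rangle=\langle v_1\cup x,[M]\rangle$ for $x\in H^2(M;\Z_2)$, and the first Wu class of a closed oriented $3$-manifold vanishes, $v_1=w_1(TM)=0$. Thus the cross term vanishes and $\te$ is a homomorphism. I expect this Wu-class input to be the only genuinely nontrivial ingredient; everything else is bookkeeping with Poincar\'e duality and the standard computation of $\Omega_2$.
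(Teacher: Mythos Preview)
Your argument is correct but takes a genuinely different, algebraic route from the paper. The paper proves additivity purely geometrically: given transverse embedded representatives $S$, $S'$ with double curve $C = S \pitchfork S'$, it observes that since $M$ is orientable, along each component $C_0 \subset C$ the tubular neighborhoods in $S$ and in $S'$ are either both annuli or both M\"obius bands; resolving the double curves therefore preserves Euler characteristic, so the resolved representative of $[S]+[S']$ has bordism class $\langle S\rangle + \langle S'\rangle \in \Omega_2$. You instead recast $\te$ as $\alpha \mapsto \langle \alpha^3,[M]\rangle$ and kill the cross term $\alpha^2\beta + \alpha\beta^2 = \mathrm{Sq}^1(\alpha\beta)$ via the Wu formula and $v_1 = w_1(TM) = 0$. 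Both proofs invoke orientability at the same pivot point (matching of normal neighborhoods versus vanishing of $w_1(TM)$). The paper's approach is elementary and self-contained, needing no Steenrod machinery; yours situates the fact cleanly in the characteristic-class framework, identifying $\te$ with the Stiefel--Whitney number $w_1^2[\,\cdot\,]$ and making explicit that additivity is exactly the vanishing of $\langle \mathrm{Sq}^1 x,[M]\rangle$ on $H^2(M;\Z_2)$, a formulation that would generalize more readily.
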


\begin{proof}
	To check the last statement, let $S$ and $S^\pr$ represent (possibly) distinct classes in $H_2(M; \Z_2)$ and now $C := S \pitchfork S^\pr$. Because $M$ is orientable, for any component $C_0 \subset C$, neighborhoods of $C_0 \subset S$ and $C_0 \subset S^\pr$ are either both annuli or both M\"{o}bius bands. So, the processes of surgering out double curves preserves Euler characteristic, and hence the element in $\Omega_2$.
\end{proof}

\subsection*{Linking forms}
Linking forms are symmetric non-singular pairings $\lambda: \tau \times \tau \ra \Q \slash \Z$, where we abbreviate $\operatorname{torsion}(H_1(M;\Z))$ by $\tau$. When $M$ has no 2-torsion, the pairing takes values in $\Z_{\text{odd}} \slash \Z$, where $\Z_{\text{odd}}$ denotes the integers with the odd primes inverted, and in this case, row and column operators over $\Z$ allow us to find a basis in which $\lambda$ is diagonal.

\subsection*{Aside on prime 2}
For $M = RP^3 \# RP^3$, $\tau = Z_2 \oplus \Z_2$, and $\lambda = \begin{vmatrix}
	\frac{1}{2} & 0 \\ 0 & \frac{1}{2}
\end{vmatrix}$, in the natural basis a form \emph{cannot} be hyperbolized over $\Z$. However, if $M$ is the $T^2$ bundle over $S^1$ with monodromy $\begin{vmatrix}
	-1 & 0 \\ 0 & -1
\end{vmatrix}$, call it $B$, then $H_1(B) \cong \Z \oplus \Z_2 \oplus \Z_2$, $\tau = \Z$, and $\lambda = \begin{vmatrix}
	0 & \frac{1}{2} \\ \frac{1}{2} & 0
\end{vmatrix}$, where we use the basis consisting of the fibers of the two ``horizontal'' Klein bottles in $B$ (these intersect the fiber $T^2$ in a meridian and longitude, respectively). This linking form is \emph{not} diagonalizable over $\Z$. It is known \cite{epstein65} no $L_{p,q}^-$, the punctured lens space, $p$ even, embeds in $S^4$, so in particular $RP^3 \# RP^3 = L_{2,1} \# L_{2,1}$ does not embed in $S^4$. By \cite{zee65}, all $L_{p,q}^-$, $p$ odd, do embed in $S^4$. We see $(RP^3)^-$ \emph{cannot} embed due to the implied splitting of the normal bundle $RP^2 \subset (RP^3) \subset S^4$ (see Fact \ref{fact:dne}), but $H_2(L_{4,1}^-;\Z_2)$ is generated by an embedded Klein bottle, so its exclusion goes beyond Fact \ref{fact:dne}. The bundle $B$, as its hyperbolic form might suggest, does embed in $S^4$. To see the embedding, initially embed $M \hookrightarrow S^1 \times B^3$ fiber-wise by embedding $T^2 \subset B^3$ and revolving $B^3$ by $2\pi$ around an approximate axis of symmetry for $T^2$, then embed $S^1 \times B^3 \hookrightarrow S^4$.

A genus 3 HD for $B$ is easily derived from the fundamental domain, shown in Figure \ref{fig:b-hd}.
\begin{figure}[ht]
	\centering
	\includegraphics[scale=0.4]{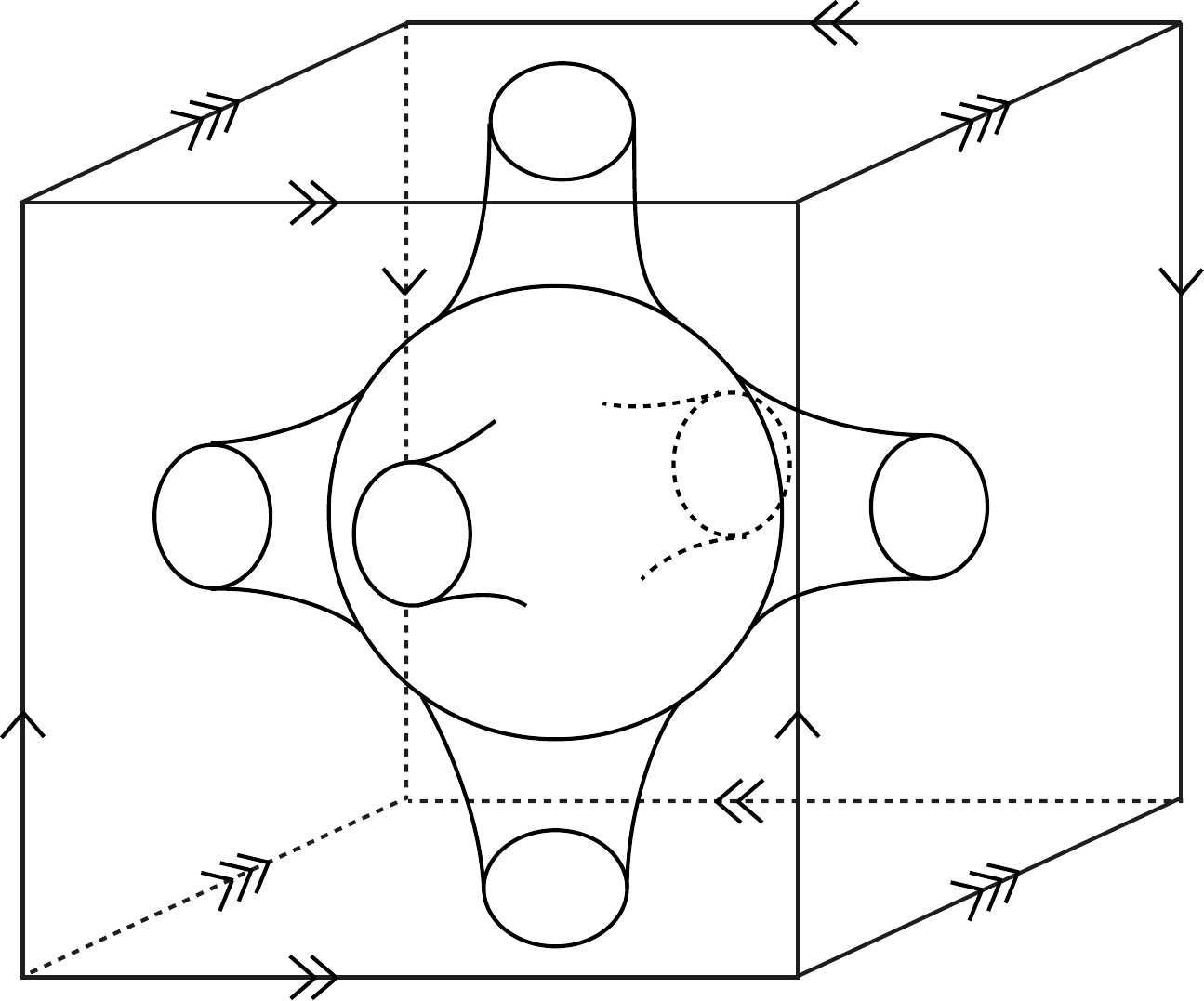}
	\caption{}\label{fig:b-hd}
\end{figure}

Homologically, the intersection matrix of $\mathcal{L}_+$ with $\mathcal{L}_-$, i.e.\ the boundary map between 1 and 2 handles, is $\begin{vmatrix}
	0 & 0 & 0 \\ 0 & 0 & 2 \\ 0 & 2 & 0
\end{vmatrix}$.

\section{Weakening the Unlink Condition and Enhancing Hantzsche}\label{sec:weak-unlink}
We have seen (Theorem \ref{thm:ub-up} ``only if'') that after isotoping an embedding $M^3 \hookrightarrow \R^4$ so that the 4th coordinate becomes an ordered Morse function, a pair of 0-framed unlinks $\mathcal{L}_-$ and $\mathcal{L}_+$ appears in the Heegaard surface $\Sigma$, for $M$ at the middle level $\R^3 \times 0$, or, equivalently, $S^3 \times 0$, $(\mathcal{L}_-,\mathcal{L}_+)$ being equivalent notation for the pair of Lagrangians $(R,B)$.

Suppose we ``forget'' that $\mathcal{L}_\pm$ are unlinks, and remember only that within $\mathcal{L}_-$ ($\mathcal{L}_+$) pairwise linking numbers and framings vanish. Then we may derive the conclusion of Hantzsche's theorem from this weaker hypothesis. First, we set up and prove the original theorem. For closed 3-manifolds, Poincar\'{e} duality gives an isomorphism:
\begin{equation}\label{eq:iso-1}
	H_1(M;\Z) \cong H^2(M;\Z),
\end{equation}
and $H^2$ fits in a universal coefficient sequence:
\begin{equation}\label{eq:iso-2}
	0 \ra \operatorname{Ext}_\Z(H_1(M;\Z),\Z) \ra H^2(M;\Z) \ra \operatorname{Hom}(H_2(M;\Z),\Z) \ra 0,
\end{equation}
with
\begin{equation}\label{eq:iso-3}
	\operatorname{Ext}_\Z(H_1(M;\Z),\Z) \text{ naturally isomorphic to } \operatorname{Hom}_\Z(H_1(M;\Z),\Q\slash\Z)
\end{equation}
Combining \eqref{eq:iso-1}, \eqref{eq:iso-2}, and \eqref{eq:iso-3} and using $\tau$ to denote the torsion subgroup, there is a natural isomorphism $\tau H_1(M;\Z) \cong \operatorname{Hom}(\tau H_1(M;\Z),\Q \slash \Z)$, which may be interpreted as the \emph{nonsingular} linking form:
\begin{equation}
	\tau H_1(M;\Z) \otimes \tau H_1(M;\Z) \xrightarrow{\text{LF}} \Q \slash \Z
\end{equation}

\begin{theorem}[Hantzsche]\label{thm:hantzsche}
	If $M^3$ embeds in $S^4$ there is a splitting $\tau H_1(M;\Z) \cong A \oplus B$ with LF vanishing identically on both $A$ and $B$. Since $\mathrm{LF}$ is nonsingular, this immediately gives natural isomorphism $A \cong \operatorname{Hom}(B, \Q\slash\Z)$ and $B \cong \operatorname{Hom}(A,\Q\slash\Z)$. In particular, $\abs{\tau H_1(M;\Z)}$ is a square integer. We say that $\operatorname{LF}(M)$ is \emph{hyperbolic}.
\end{theorem}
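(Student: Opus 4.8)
The plan is to use the classical separation argument. Since $M^3$ is closed and orientable, an embedding $M \hookrightarrow S^4$ separates: by Alexander duality $\tilde H_0(S^4 \setminus M) \cong \tilde H^3(M) \cong \Z$, so $S^4 \setminus M$ has exactly two components, whose closures are compact oriented 4-manifolds $W_1, W_2$ with $\partial W_1 = M = \partial W_2$ and $W_1 \cup_M W_2 = S^4$ (the normal bundle is trivial, $M$ being orientable in $S^4$). For $i = 1,2$ let $\iota_i\colon M \hookrightarrow W_i$ be the inclusion and set $A_i := \ker\big(\tau H_1(M;\Z) \xrightarrow{\iota_{i*}} H_1(W_i;\Z)\big)$. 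These two subgroups will be the Lagrangians $A$ and $B$.

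The crux is that each $A_i$ is isotropic for $\operatorname{LF}$, and I would prove this through the 4-manifold description of the linking form. For $x \in \tau H_1(M)$, the class $\iota_{i*}x$ is torsion, hence rationally trivial, so $x$ bounds a \emph{rational} 2-chain $C$ in $W_i$; then $\operatorname{LF}(x,y) = C \cdot \tilde y \bmod \Z$, where $\tilde y$ is a $1$-cycle representing $y$ pushed into $\operatorname{int}(W_i)$ and $C \cdot \tilde y$ is the rational intersection number. If moreover $x \in A_i$, then $\iota_{i*}x = 0$ integrally, so $x$ bounds an \emph{integral} $2$-chain $C_0$ in $W_i$, whence $C_0 \cdot \tilde y \in \Z$ and $\operatorname{LF}(x,y) = 0$ in $\Q\slash\Z$ for every $y \in A_i$. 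Making this intersection formula precise — producing the chains, arranging transversality, and checking the value is independent of all choices and agrees with the intrinsic form on $M$ — is the main technical obstacle; the rest is formal. (This is the usual ``half lives, half dies'' mechanism, specialized to torsion.)

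Complementarity of $A_1$ and $A_2$ then comes from Mayer--Vietoris for $S^4 = W_1 \cup_M W_2$. Since $H_1(S^4) = H_2(S^4) = 0$, the sequence collapses to an isomorphism $\beta = (\iota_{1*}, \iota_{2*})\colon H_1(M;\Z) \xrightarrow{\cong} H_1(W_1;\Z) \oplus H_1(W_2;\Z)$. Restricting to torsion, $\beta$ carries $A_1$ isomorphically onto $0 \oplus \tau H_1(W_2;\Z)$ and $A_2$ onto $\tau H_1(W_1;\Z) \oplus 0$, so $A_1 \cap A_2 = \ker\beta \cap \tau H_1(M) = 0$ while $A_1 \oplus A_2 = \tau H_1(M)$. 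Setting $A := A_1$ and $B := A_2$ gives the asserted splitting with $\operatorname{LF}$ vanishing identically on each summand.

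Finally, nonsingularity of $\operatorname{LF}$ upgrades isotropy to duality. Because $\tau H_1(M) = A \oplus B$ with both summands isotropic, the adjoint $a \mapsto \operatorname{LF}(a,-)\big|_B$ is injective on $A$: an $a$ pairing trivially with $B$ and, by isotropy, with $A$ would lie in the radical and hence vanish. Thus $\abs{A} \le \abs{\operatorname{Hom}(B,\Q\slash\Z)} = \abs{B}$, and by symmetry $\abs{B} \le \abs{A}$, so $\abs{A} = \abs{B}$ and the injections are the claimed isomorphisms $A \cong \operatorname{Hom}(B,\Q\slash\Z)$ and $B \cong \operatorname{Hom}(A,\Q\slash\Z)$. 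Consequently $\abs{\tau H_1(M;\Z)} = \abs{A}\,\abs{B} = \abs{A}^2$ is a perfect square and $\operatorname{LF}$ is hyperbolic.
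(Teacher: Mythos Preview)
Your overall architecture matches the paper's: split $S^4$ along $M$ into two compact pieces $W_1,W_2$, set $A=\ker(\tau H_1 M\to H_1 W_1)$ and $B=\ker(\tau H_1 M\to H_1 W_2)$, show each is isotropic, and show they span. Your Mayer--Vietoris argument for $A\oplus B=\tau H_1(M)$ is clean (the paper leaves this implicit), and the final counting paragraph is fine.

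The isotropy step, however, has a real gap. In a $4$-manifold a $2$-chain $C$ and a $1$-cycle $\tilde y$ are generically disjoint ($2+1<4$), so ``$C\cdot\tilde y$'' carries no information and the displayed formula $\operatorname{LF}(x,y)=C\cdot\tilde y\bmod\Z$ is not a linking-form identity. A symptom: nothing in your paragraph uses the hypothesis $y\in A_i$, so if the formula were valid it would give $\operatorname{LF}(x,y)=0$ for \emph{all} torsion $y$ once $x\in A_i$, forcing $A_i\subset\operatorname{rad}(\operatorname{LF})=0$ --- contradicting your own splitting.

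The repair is what the paper does: promote \emph{both} classes to closed $2$-cycles in $W_i$ and use that the intersection form on $H_2(W_i;\Z)$ vanishes because $W_i\subset S^4$. With $t=\lvert\tau H_1 M\rvert$, pick $2$-chains $u,v$ in $M$ with $\partial u=tp$, $\partial v=tq$, and (using $p,q\in A_i$) integral $2$-chains $\delta(p),\delta(q)$ in $W_i$ with boundaries $p,q$. Then $X=t\,\delta(p)-u$ and $Y=t\,\delta(q)-v$ are $2$-cycles in $W_i$, one checks $\lambda(p,q)\equiv\tfrac{1}{t^{2}}\,X\cdot Y\pmod\Z$, and $X\cdot Y=0$ finishes it. Here the assumption on $q$ is genuinely consumed in building $Y$.
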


\begin{proof}
	Write $S^4 = \mathcal{A} \cup_M \mathcal{B}$ and define $A = \ker(\tau H_1(M;\Z) \ra H_1(\mathcal{A};\Z))$ and $B = \ker(\tau H_1(M;\Z)$ $\ra H_1(\mathcal{B};\Z))$. The usual proof proceeds by showing that the linking pairing on $M$ vanishes when restricted to the kernel into the 4-manifolds $\mathcal{A}$ or $\mathcal{B}$ in $S^4$ that $M^3$ bounds.

	Taking always $\Z$-coefficients, let $p,q \in \ker(\tau H_1(M) \ra H_1(\mathcal{A}))$ and let $t$ be order $\tau H_1(M)$. Let $tp = \de u$ and $tq = \de v$. The linking number $\lambda(p,q)$ is computed:
	\begin{equation}
		\lambda(p,q) = \frac{1}{t} p \cd v = \frac{1}{t} u \cd q = \frac{1}{t^2} X \cd Y
	\end{equation}
	where $X = t \delta_{\mathcal{A}}(p) - u$, $Y = t \delta_{\mathcal{A}}(q) - v$ are 2-cycles in $\mathcal{A}$ naturally associated to $p,q$. $\delta_{\mathcal{A}}(p)$ denotes a 2-chain in $\mathcal{A}$ with boundary $p$, similarly for $\delta_{\mathcal{B}}(q)$. But since $\mathcal{A} \subset S^4$ the intersection form on $A$ is trivial, $X \cd Y = 0$, showing $A$ and $B$ are isotropic subspaces of the linking form $\lambda$.
\end{proof}

\begin{figure}[ht]
	\centering
	\begin{tikzpicture}[scale=1.1]
		\draw (0,0) circle (4);
		\draw[rotate=-30] (0,-4) arc(270:90:0.7 and 4);
		\draw[rotate=-30,dashed] (0,-4) arc(-90:90:0.7 and 4);
		\draw (-4,0) arc (180:360:4 and 0.7);
		\draw[dashed] (-4,0) arc (180:0:4 and 0.7);
	
		\draw[fill=black] (-1.17,-0.67) circle (0.3ex);
		\node at (-1.6,-0.9) {$\Sigma$};
		\draw[fill=black] (1.17,0.67) circle (0.3ex);
		\node at (1.6,0.9) {$\Sigma$};
	
		\node at (-1.5,2) {$\mathcal{A}$};
		\node at (2.5,1.8) {$\mathcal{B}$};
		\node at (-2.5,-1.8) {$\mathcal{A}$};
		\node at (1.5,-2) {$\mathcal{B}$};
	
		\node at (-4.3,0) {$\alpha$};
		\node at (4.3,0) {$\beta$};
		\node at (0.5,2.7) {$\text{HB}_+$};
		\node at (-2.4,-2.55) {$\text{HB}_-$};
	
		\node at (3.5,-1.3) {$S^3 \times 0 = \alpha \cup_\Sigma \beta$};
		\draw[->] (3.5,-1.1) -- (3.2,-0.55);
		\node at (3.1,4.1) {$M = \mathrm{HB}_+ \cup_\Sigma \mathrm{HB}_-$};
		\draw[->] (3.1,3.8) to[out=225,in=15] (1.5,3);

		\node at (-4.6,0) {$\hphantom{m}$};
	\end{tikzpicture}
	\caption{}\label{fig:isotropic-surface}
\end{figure}

By Theorem \ref{thm:ub-up}, we may replace the embedding hypothesis in Theorem \ref{thm:hantzsche} with the hypothesis that the Heegaard diagram HD is DU, but actually, a weaker hypothesis suffices.

\begin{definition}
	A Heegaard diagram is homologically doubly unlinked (HDU) iff its pair of GLs $(\mathcal{L}_+. \mathcal{L}_-)$ for $\lbar{\Sigma}$ a bar-stabilization of $\Sigma_g$ (some $k \geq 0$), and a choice of embedding $\lbar{\Sigma} \overset{e}{\hookrightarrow} \Sigma^3$, $\Sigma^3$ an integral homology 3-sphere, so that under this embedding both $e(\mathcal{L}_+)$ and $e(\mathcal{L}_-)$ are handlebody equivalent on $e(\lbar{\Sigma})$ to links with vanishing linking numbers---including the diagonal framings.
\end{definition}

With this definition we state an enhanced Hantzsche theorem:

\begin{theorem}\label{thm:enhanced-hantz}
	If a closed 3-manifold $M$ has a HDU Heegaard diagram $\mathcal{D}$, then $\tau H_1(M)$ splits as $A \oplus B$ with the LF vanishing identically on both $A$ and $B$, i.e.\ iff the LF is hyperbolic. Conversely, if $M$ is a $\Z_2$-homology 3-sphere with hyperbolic LF, then $M$ has a HDU Heegaard diagram.
\end{theorem}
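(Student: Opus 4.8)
The plan is to prove the two directions of Theorem~\ref{thm:enhanced-hantz} separately, treating the forward direction as a direct weakening of the Hantzsche argument and the converse as a construction built on Theorem~\ref{thm:ub-up} and the Zeeman/Epstein examples.

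**Forward direction (HDU $\Rightarrow$ hyperbolic LF).** First I would recall that the proof of Theorem~\ref{thm:hantzsche} never really used the \emph{full} embedding $M \hookrightarrow S^4$; it used only that $M = \mathcal{A} \cup_M \mathcal{B}$ bounds on two sides so that the two kernels $A, B$ of $\tau H_1(M) \to H_1(\mathcal{A})$, $\to H_1(\mathcal{B})$ are isotropic. So the real content is to show that the HDU hypothesis already produces a pair of $4$-manifolds $\mathcal{A}, \mathcal{B}$ (or at least the algebraic data they would give) with $M = \partial$ and the right kernels. The natural route is to mimic the $(\Leftarrow)$ construction of Theorem~\ref{thm:ub-up}: given an embedding $e:\lbar{\Sigma} \hookrightarrow \Sigma^3$ into a homology sphere with $e(\mathcal{L}_-)$ and $e(\mathcal{L}_+)$ handlebody-equivalent to links with vanishing linking numbers and framings, one evolves $e(\lbar{\Sigma})$ downward (resp.\ upward) in $\Sigma^3 \times \R$ doing ambient $1$-surgery along Seifert-type surfaces for the components. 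Because linking numbers and self-framings vanish, each Lagrangian still bounds \emph{disjoint} (possibly knotted, possibly non-planar) surfaces, and their traces build handlebody-like regions $\mathcal{A}, \mathcal{B}$ with $\partial = M$ inside $\Sigma^3 \times \R$. The key point is homological: the classes in $\mathcal{L}_-$ die in $H_1$ of the lower region and those in $\mathcal{L}_+$ die in the upper, exactly as required so that $A$ and $B$ are isotropic for LF. Then the linking-number computation of Theorem~\ref{thm:hantzsche}---$\lambda(p,q) = \frac{1}{t^2} X \cdot Y$ with $X \cdot Y = 0$ because the ambient $4$-manifold (a neighborhood in $\Sigma^3 \times \R$, hence with trivial relevant intersection form on these boundary-torsion classes) has vanishing intersection form on these cycles---goes through verbatim. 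I would emphasize that $\Sigma^3$ being an \emph{integral} homology sphere guarantees $H_1(\Sigma^3) = 0$ so that the linking form of $M$ is computed the same way as in $S^3$.

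**Converse ($\Z_2$-homology sphere with hyperbolic LF $\Rightarrow$ HDU).** Here I would use a model-building strategy. A hyperbolic linking form over $\Z_{\text{odd}}$ (legitimate since a $\Z_2$-homology sphere has no $2$-torsion, so LF takes values in $\Z_{\text{odd}}/\Z$ and is diagonalizable) decomposes as an orthogonal sum of hyperbolic planes, each of the shape $\begin{pmatrix} 0 & 1/p \\ 1/p & 0 \end{pmatrix}$ on $\Z_p \oplus \Z_p$, up to the standard classification of nonsingular symmetric forms on finite odd groups. The plan is to realize each such hyperbolic plane by a summand that we \emph{know} is DU: the Zeeman embeddings give $L_{p,q} \# -L_{p,q} \hookrightarrow S^4$ for $p$ odd, whose linking form is precisely a hyperbolic plane, and by Theorem~\ref{thm:ub-up} these have DU (hence HDU) Heegaard diagrams. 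Since connected sum of manifolds corresponds to disjoint-union/juxtaposition of Heegaard diagrams and orthogonal direct sum of linking forms, assembling these standard HDU pieces produces a $3$-manifold $N$ with the \emph{same} linking form as $M$. The remaining step is to argue that any two $\Z_2$-homology spheres with isomorphic (odd) linking forms have Heegaard diagrams that become homologically indistinguishable after bar-stabilization---i.e.\ the HDU property depends only on the linking form, not on $M$ itself. This is plausible because HDU is a purely homological condition on $(\mathcal{L}_-, \mathcal{L}_+)$ and the linking form is exactly the homological datum recording how the two Lagrangians sit (the intersection matrix between $\mathcal{L}_+$ and $\mathcal{L}_-$, as in the example computation for the bundle $B$, presents the linking form).

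**The hard part** will be this last matching step in the converse: going from ``$M$ and the assembled standard model $N$ have isomorphic linking forms'' to ``$M$ \emph{itself} has a HDU diagram.'' HDU is stated as a condition on $M$'s own Heegaard diagram, so I cannot simply exhibit the model $N$; I must show that the homological equivalence of the embeddings can be transported onto a genuine (bar-stabilized) Heegaard diagram for $M$. I expect this requires showing that, given any Heegaard diagram for $M$, one can choose the embedding $e$ and handlebody-group modifications $h_\ell, h_u$ so that the resulting linking data realizes the hyperbolic splitting---essentially a realization lemma saying every hyperbolic-LF class of $3$-manifolds contains the requisite Lagrangians with vanishing linking numbers after embedding into \emph{some} homology sphere $\Sigma^3$. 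The freedom to use an arbitrary integral homology sphere $\Sigma^3$ (rather than $S^3$) in the HDU definition is presumably exactly what buys this flexibility, and I would lean on that generality to absorb whatever obstruction would otherwise appear when trying to embed into $S^3$ proper.
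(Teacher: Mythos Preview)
Your forward direction has a real gap. You propose to mimic the $(\Leftarrow)$ construction of Theorem~\ref{thm:ub-up}, evolving $e(\lbar{\Sigma})$ in $\Sigma^3 \times \R$ and doing ambient $1$-surgeries along ``Seifert-type surfaces'' for the components of $\mathcal{L}_\pm$. But the whole point of that construction was that the surgery cores were \emph{disks}, so that the trace on each side is literally $\mathrm{HB}_\ell$ (resp.\ $\mathrm{HB}_u$) and the union is $M$. With only vanishing linking numbers you get disjoint Seifert surfaces of positive genus; surgering along those produces a $3$-manifold with boundary $\lbar{\Sigma}$ that is \emph{not} the handlebody, so gluing top to bottom does not give $M$. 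You therefore never place $M$ inside any $4$-manifold, and the Hantzsche computation $\lambda(p,q)=\tfrac{1}{t^2}X\cdot Y$ has nowhere to live. The paper avoids this by not trying to embed $M$ at all: it builds an abstract $4$-manifold $\Z$ by attaching thickened copies $\mathrm{HB}_u\times[-\epsilon,\epsilon]$ and $\mathrm{HB}_\ell\times[-\epsilon,\epsilon]$ to $\Sigma^3\times[-1,1]$ along the embedded $e(\lbar{\Sigma})$, so that $M=\mathrm{HB}_u\cup_{\lbar{\Sigma}}\mathrm{HB}_\ell$ sits in $\Z$ tautologically. The linking-number hypothesis then enters only to show the intersection form on $\Z$ vanishes, via a second ``vertical'' decomposition $\Z=U\cup_{\Sigma^3}D$: the piece $U$ is homotopically $\Sigma^3$ with $2$-handles attached along $e(\mathcal{L}_+)$, so its intersection form is the linking matrix of $e(\mathcal{L}_+)$, which is zero by hypothesis; similarly for $D$; and gluing along a homology sphere preserves triviality of the form. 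This interplay between the ``horizontal'' splitting $\Z=\mathfrak{A}\cup_M\mathfrak{B}$ (which defines $A,B$) and the vertical splitting (which kills the form) is the mechanism you are missing.

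Your converse is closer, and you correctly isolate the hard step: passing from the model $N=\#_i(L_{p_i,q_i}\#-L_{p_i,q_i})$ to an arbitrary $\Z_2$-homology sphere $M$ with the same linking form. The tool that closes this gap is the Torelli group. After diagonalizing the boundary map, the homology class of $\mathcal{L}_+$ in $H_1(\lbar{\Sigma})$ is determined by the linking-form data $\{q_i/p_i\}$ alone; the Torelli group acts transitively on geometric Lagrangians in a fixed homology class, so $M$ and $N$ differ by a Torelli modification of the gluing. The crucial observation is that a Torelli element, applied to an embedded surface in $S^3$, preserves all linking and self-linking numbers of curves on it. Hence the DU diagram for $N$ (guaranteed by Zeeman's embedding and Theorem~\ref{thm:ub-up}) becomes, after the Torelli modification, a Heegaard diagram for $M$ in which $\mathcal{L}_\pm$ still have vanishing pairwise linking numbers---i.e.\ an HDU diagram. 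Your guess that the freedom to embed into an arbitrary homology $3$-sphere $\Sigma^3$ is what makes the converse work is off the mark: the model examples already embed in $S^3$, and that is where the HDU diagram for $M$ lives as well.
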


\begin{proof}
	The first statement is similar to the proof of Theorem \ref{thm:hantzsche}, but instead of working $S^4$ we build a bespoke 4-manifold $\Z$ in which certain linking numbers in $M$ may be computed in terms of intersection numbers of 2-cycles. $\Z$ may be pictured as something like Figure \ref{fig:isotropic-surface} with voids where the four ``quadrants'' are seen. Similar to writing $S^4 = \mathcal{A} \cup_M \mathcal{B}$, we will write $\Z = \mathfrak{A} \cup_M \mathfrak{B}$.

	To be explicit, $\Sigma^3 = \alpha \cup_{\Sigma^2} \beta$, the union of the complementary regions of $e(\Sigma^2)$. Thickening, we write:
	\begin{equation}
		\Sigma^3 \times [-1,1] = \alpha \times [-1,1] \cup_{\Sigma^2 \times [-1,1]} \beta \times [-1,1].
	\end{equation}
	Form $\Z$ by attaching a $\mathrm{HB}_u \times [-\epsilon,\epsilon]$ to $(\Sigma^2 \times 1) \times [-\epsilon,\epsilon]$ so that $f(\mathcal{L}_+) \times 0$ bounds disks in $\mathrm{HB}_u \times 0$ and by attaching $\mathrm{HB}_\ell \times [-\epsilon,\epsilon]$ to $(\Sigma^2 \times 1) \times [-\epsilon,\epsilon]$ so that $e(\lbar{\mathcal{L}}_-) \times 0$ bounds disks in the lower handlebody $\mathrm{HB}_\ell \times 0$. The interval $[-\epsilon,\epsilon]$ parameterizes a normal bi-collar to $e(\Sigma) \hookrightarrow \Sigma^3$. The pieces $\mathfrak{A}, \mathfrak{B} \subset \Z$ are
	\begin{equation}
		\begin{split}
			\mathfrak{A} & = \alpha \times [-1,1] \cup \mathrm{HB}_u \times [-\epsilon,0] \cup \mathrm{HB}_\ell \times [-\epsilon,0]\text{, and } \\
			\mathfrak{B} & = \beta \times [-1,1] \cup \mathrm{HB}_u \times [0,\epsilon] \cup \mathrm{HB}_\ell \times [0,\epsilon].
		\end{split}
	\end{equation}
	Of course, $M = \mathrm{HB}_u \cup_{\lbar{\Sigma}} \mathrm{HB}_\ell$.

	Again, let $\tau H_1(M)$ and denote $A := \operatorname{ker}(\tau H_1(M) \ra H_1(\mathfrak{A}))$ and $B := \operatorname{ker}(\tau H_1(M) \ra H_1(\mathfrak{B}))$. Just as in the proof of Theorem \ref{thm:hantzsche}, let $t = \abs{\tau H_1(M)}$, $p,q \in A$, $tp = \de u$, $tq = \de v$. Following \cite{gl78} (see pages 59 and 60), the linking number $\lambda(p,q)$ is computed:
	\begin{equation}
		\lambda(p,q) = \frac{1}{t} pv = \frac{1}{t} uq = \frac{1}{t^2} X \cd Y,
	\end{equation}
	where $X = t \delta_{\mathfrak{A}} - u$, $Y = t \delta_{\mathfrak{A}}(q) - v$ are 2-cycles in $\mathfrak{A}$ naturally associated to $p$ and $q$.

	The manifold $\mathfrak{A}$ has an identically vanishing intersection form over the integers: $H_2(\mathfrak{A}) \times H_2(\mathfrak{A}) \xrightarrow{\text{zero}} Z$, and similarly $H_2(\mathfrak{B}) \times H_2(\mathfrak{B}) \xrightarrow{\text{zero}} Z$. These vanishings are a consequence of the vanishing $H_2(\Z) \times H_2(\Z) \xrightarrow{\text{zero}} Z$ which we now verify. Consider the ``vertical'' decomposition of $\Z$
	\begin{equation}\label{eq:decomp-z}
		\Z = U \cup_{\Sigma^3 \times 0} D
	\end{equation}
	where ``up'' $= U = \Sigma^3 \times [0,1] \cup \mathrm{HB}_u \times [-\epsilon,\epsilon]$, and ``down'' $= D = \Sigma^3 \times [-1,0] \cup \mathrm{HB}_\ell \times [-\epsilon,\epsilon]$.

	The hypothesis that $e(\mathcal{L}_+)$ has (up to an automoprhism of $\mathrm{HB}_u$) vanishing linking and self-linking numbers means $H_2(\mathrm{U}) \times H_2(\mathrm{U}) \xrightarrow{\text{zero}} Z$ is identically zero. Similarly, the hypothesis on $e(\mathcal{L}_-)$ implies that $H_2(D) \times H_2(D) \xrightarrow{\text{zero}} Z$ vanishes. Now exploit the fact that \eqref{eq:decomp-z} $\Z$ is made from $U$ and $D$ by gluing along a homology sphere to conclude that
	\begin{equation}
		H_2(\Z) \times H_2(\Z) \xrightarrow{\text{zero}} Z
	\end{equation}
	vanishes. Figure \ref{fig:decomp-z} summarizes the dance between ``horizontal'' and ``vertical'' decompositions of $\Z$ on which the proof depends.

	\begin{figure}[ht]
		\centering
		\begin{tikzpicture}
			\node at (0,0) {\includegraphics[scale=0.6]{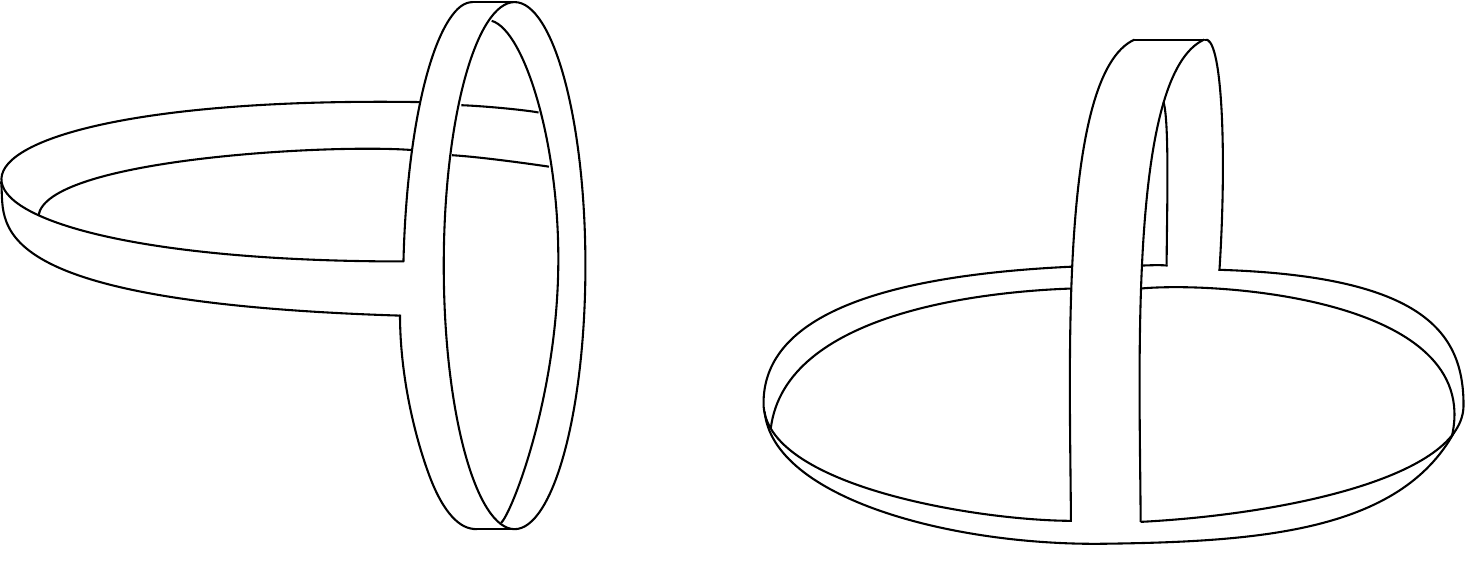}};
			\draw[decorate,decoration={brace,amplitude=5pt}] (-7.7,-3) -- (-7.7,3) node[midway,xshift=-0.5cm] {$\mathfrak{A}$};
				
			\draw[<->] (-2.75,-2.7) -- (-2.25,-2.7);
			\node at (-2.5,-3) {$[-\epsilon,0]$};
			\node at (-4.5,-2) {$\mathrm{HB}_\ell \times [-\epsilon,0] \ra$};
			
			\node at (-5.5,-0.8) {$\alpha \times [-1,1]$};
			\draw[->] (-4.7,-0.5) -- (-4.5,-0.3);
			\node at (-4.6,2.7) {$\mathrm{HB}_u \times [-\epsilon,0] \ra$};
			\node at (-1.2,2.5) {$\leftarrow M$};
		
			\draw[decorate,decoration={brace,amplitude=5pt}] (7.5,-2.8) -- (0.1,-2.8) node[midway,yshift=-0.5cm] {$\mathrm{U}$};
			\node at (1.8,2) {$\mathrm{HB}_u \times [-\epsilon,\epsilon]$};
			\draw[->] (3.1,1.9) -- (3.3,1.7);
			\node at (1.8,-1.5) {$\Sigma^3 \times [0,1]$};
			\draw[->] (2,-1.8) -- (2.2,-2.1);
		\end{tikzpicture}
		\caption{}\label{fig:decomp-z}
	\end{figure}

	For the converse direction, assume $M$ is a $\Z_2$-homology 3-sphere and LF$(M)$ is hyperbolic (vanishing on $A$ and $B$ with $A \oplus B = \tau H_1(M;\Z)$). It is helpful to put LF$(M)$ into a normal, diagonal, form. $\mathcal{L}_-$ is the meridians to a lower genus $g$ handlebody $\mathrm{HB}_-$. A standard application of the Euclidean algorithm (as stated in section \ref{sec:examples}) allows us to slide and permute the 2-handles of $\mathrm{HB}_+$, attached along $\mathcal{L}_+$, to make the homological boundary map between and 2- and 1-handles diagonal with all 0 or 1 diagonal entries in the upper left corner. The number of zeros is $r := \mathrm{rank}(H_1(M;\Q))$. The number $s$ of 1 entries is the number of homologically cancelling handle pairs, and the other diagonal entries are the order of the summands of a cyclic decomposition of $\tau H_1(M)$ with $g-r-s$ summands. This much is visible just from the image of $\mathcal{L}_+$ in $H_1(\mathrm{HB}_-)$; to compute the linking form we also need to know the image of $\mathcal{L}_+$ in $H_1(\Sigma) = H_1(\de \mathrm{HB}_-) \cong \Z^{2g} = \operatorname{span}(\text{1-handle co-cores, }\mathcal{L}_-\text{, and some collection of homological }\delta_{ij}\text{-duals }\hat{\mathcal{L}}_-)$. Indeed, if $a,b \in \tau H_1(M)$, then we may write:
	\begin{multline}
		ta = x+y,\ tb = w+z,\ t = \abs{\tau H_1(M)},\ x,w \in \operatorname{ker}(H_1(\Sigma) \ra H_1(\mathrm{HB}_-)) \\ \text{ and } y,z \in \operatorname{ker}(H_1(\Sigma) \ra H_1(\mathrm{HB}_+)),
	\end{multline}
	then:
	\begin{equation}
		\text{linking number}(a,b) =: \lambda(a,b) = \frac{1}{t}\langle y,b \rangle_\Sigma = \frac{1}{t}\langle z,a\rangle_\Sigma,
	\end{equation}
	$\langle, \rangle_\Sigma$ the intersection number.

	If we use a $(\mathcal{L}_i,\hat{\mathcal{L}}_i)$-basis for $H_1(\Sigma_g)$, the $\de$-map from $H_1(\mathcal{L}_+)$ is diagonal $\langle \mathcal{L}_{-,i},\mathcal{L}_{+,i} \rangle_\Sigma = p_i$, a diagonal entry of the $\de$-map to $H_1(\text{lower HB})$. Then, there is a well-defined $q_i$, $-p_i < q_i < p_i$, $q_i = \langle \hat{\mathcal{L}}_{-,i},\mathcal{L}_{+,i}\rangle_\Sigma$, and $q_i \slash p_i$ is the $i$th entry of the linking form, $g-r-s \leq i \leq q$.

	Because $M$ is assumed to be a $\Z_2$-homology 3-sphere, the $p_i$ are odd. Also, from this hypothesis (see section \ref{sec:examples}), the linking form is diagonalizable (the example $B$ was introduced to show a non-diagonalizable LF when 2-torison is present). Using $\mathcal{L}_-$ and $\hat{\mathcal{L}}_-$ as a basis for $H_1(\Sigma_g)$, the homological image of $\mathcal{L}_+$ in $H_1(\Sigma_g)$ is determined, up to the action of the lower handlebody group, by the $r$ 0's, $s$ 1's, and $g-r-s$ rationals $\frac{q_i}{r_i}$. If the Heegaard gluing of the 3-manifold $M$ is multiplied (left or right) by any element of the Torelli subgroup to make a new manifold $M^\pr$, then the linking form is unchanged, $\operatorname{LF}(M^\pr) = \operatorname{LF}(M)$. The Torelli group acts transitively on $\{\mathcal{L}_+\}$ with the same homological image in $H_1(\Sigma_g)$. Furthermore, the Torelli action on embedded surfaces $\Sigma_g \subset S^3 \times 0$ will also leave all (self-)linking numbers of a sublink $\mathcal{L}_\pm \subset \Sigma_g$ unchanged.

	This means if we have a single example of an $M \hookrightarrow S^4$ (implying a DU Heegaard diagram for $M$) with LF$(M) = \lambda$, then any $M^\pr$, as above, must have at least a HDU Heegaard diagram. The examples
	\begin{equation}
		L = \#_i (L_{p_i,q_i} \# -L_{p_i,q_i}),
	\end{equation}
	since they all embed in $S^4$ \cite{zee65}, and cover all LF$(\Z_2-\text{homology sphere})$, showing that every $\Z_2$-homology sphere has a HDU Heegaard diagram embedded in $S^3$.

	It is worth noting that integral homology 3-spheres trivially have a hyperbolic linking form (in this case, after handle slides the boundary map becomes the identity on homology) so all Heegaard diagrams of integral homology spheres are HDU.
\end{proof}

\begin{note}
	This partial converse can be extended to show that if $(\mathcal{L}_-,\mathcal{L}_+)$ is associated to an embedding $M \subset S^4$ and $\mathcal{L}_+$ is acted on by an element of the $k$th Johnson subgroup \cite{john83} to get $\mathcal{L}_+^\pr$, then $M^\pr$ defined by $(\mathcal{L}_-,\mathcal{L}_+^\pr)$ will ($2_k$)-embed (see the note below for the definition).
\end{note}

\begin{note}
	We have seen that linking numbers within $e(\mathcal{L}_+)$ and $e(\mathcal{L}_-)$ provide exactly the information to recover the conclusion of Hantzsche's theorem. Between linking number and unlink are many computable invariants, e.g.\ the higher order Milnor $\lbar{\mu}$-invariants. A virtue of using an ordered embedding to define the concept of unlink preserving (UP) is it allows a unified way of conceptualizing myriad conditions weaker than an embedding $M^3 \hookrightarrow S^4$ but, in a sense, converging toward an embedding.
\end{note}

Let $P$ stand for some property of a framed link $L$ in $S^3$ Examples of interest include:

	(1) Vanishing of linking and self-linking numbers

	($2_k$) Vanishing of all $\lbar{\mu}$-inv of length $\leq k$ (when $k = 2$ this reduces to the previous example)

	(3) Boundary link

	(4) Slice link

	(5) Ribbon link

	(6) Has Kauffman bracket agreeing with the unlink with the same number of components

	(7) Cannot be distinguished from the unlink by representations into a fixed group $G$

	(8) Unlink (meaning 0-framed unlink)

\begin{definition}
	We say a closed 3-manifold $M$ \emph{$P$-embeds} (in $S^4$) iff $M$ has a Heegaard diagram $(\mathcal{L}_+,\mathcal{L}_-) \subset \Sigma$ so that there is an embedding $\Sigma \overset{e}{\hookrightarrow} S^3$ (not necessarily as a Heegaard embedding) so that $e(\mathcal{L}_+)$ and $e(\mathcal{L}_-)$ separately enjoy the property $P$.
\end{definition}

Theorem \ref{thm:ub-up} says: $M$ embeds in $S^4 \iff M$ $P(8)$-embeds. Theorem \ref{thm:enhanced-hantz} says LF$(M)$ hyperbolic $\iff M$ $P(1)$-embeds. Perhaps there is some property $P$, weaker than ``unlink'' but more tractable, that would allow us to search for $M$'s embeddable in homotopy 4-spheres but which do not $P$-embed.

Consider $P(2_k)$ above, $k = 3,4,\dots,\infty$. We can imagine finding a closed 3-manifold $M$ which embeds some homotopy 4-sphere but does not $P(2_k)$-embed. The virtue of property $P(2_k)$ is that, although much weaker than $P(8)$ ``unlink,'' its behavior under indeterminacies should be easier to track.

\bibliography{references}

@article{bb22,
  author          = {Ryan Budney and Benjamin Burton},
  journal         = {Experimental Mathematics},
  number          = {3},
  title           = {Embeddings of 3-manifolds in {$S^4$} from the point of view of the 11-tetrahedron census},
  volume          = {31},
  pages           = {988--1013},
  year            = {2022}
}

@article{hantzsche37,
  author          = {Willy Hantzsche},
  journal         = {Mathematik Zeitschrift},
  title           = {Einlagerung von {M}annigfaltigkeiten in euklidean {R}\"{a}ume},
  volume          = {43},
  pages           = {38--58},
  year            = {1937}
}

@article{m22,
  author          = {Clayton McDonald},
  title           = {Surface slices and homology spheres},
  eprint          = {arXiv:2022.02696},
  year            = {2022}
}

@article{fgmw10,
  author          = {Michael Freedman and Robert Gompf and Scott Morrison and Kevin Walker},
  journal         = {Quantum Topology},
  number          = {2},
  title           = {Man and machine thinking about the smooth 4-dimensional {Poincar\'{e}} conjecture},
  volume          = {1},
  pages           = {171--208},
  year            = {2010}
}

@incollection{fm11,
  author         = {Benson Farb and Dan Margalit},
  title          = {The Symplectic Representation and the {T}orelli Group},
  pages          = {162--199},
  publisher      = {Princeton University Press},
  booktitle      = {{A Primer on Mapping Class Groups}},
  year           = {2011}
}

@article{waj98,
  author          = {Wajnryb, Bronisław},
  journal         = {Fundamenta Mathematicae},
  number          = {3},
  title           = {Mapping class group of a handlebody},
  volume          = {158},
  pages           = {195--228},
  year            = {1998}
}

@article{af21,
  author          = {Ian Agol and Michael Freedman},
  journal         = {New Zealand Journal of Mathematics},
  title           = {Embedding {H}eegaard decompositions},
  volume          = {52},
  pages           = {727--731},
  year            = {2021}
}

@article{lau14,
  author          = {François Laudenbach},
  journal         = {Annales de la Facult\'{e} des Sciences de Toulouse},
  number          = {1},
  title           = {A proof of {R}eidemeister-{S}inger's theorem by {C}erf's methods},
  volume          = {23},
  pages           = {197--221},
  year            = {2014}
}

@article{mp23,
  author          = {Ciprian Manolescu and Lisa Piccirillo},
  journal         = {Journal of the London Mathematical Society},
  number          = {5},
  title           = {From zero surgeries to candidates for exotic definite 4-manifolds},
  volume          = {108},
  pages           = {2001--2036},
  year            = {2023}
}

@article{in20,
  author          = {Gabriel Islambouli and Patrick Naylor},
  title           = {Multisections of 4-manifolds},
  eprint          = {arXiv:2010.03057},
  year            = {2010}
}

@article{af13,
  author          = {Ian Agol and Michael Freedman},
  title           = {Simplifying 3-manifolds in {$\R^4$}},
  eprint          = {arXiv:1306.2391},
  year            = {2013}
}

@article{gl78,
  author          = {Cameron Gordon and R. A. Litherland},
  journal         = {Inventiones Mathematicae},
  title           = {On the signature of a link},
  volume          = {47},
  pages           = {53--69},
  year            = {1978}
}

@article{zee65,
  author          = {Christopher E. Zeeman},
  journal         = {Transactions of the American Mathematical Society},
  title           = {Twisting spun knots},
  volume          = {115},
  year            = {1965},
  pages           = {471--495},
}

@article{john83,
  author          = {Dennis Johnson},
  journal         = {Annals of Mathematics},
  number          = {3},
  title           = {The Structure of the {T}orelli Group {I}: {A} Finite Set of Generators for {$\mathcal{J}$}},
  volume          = {118},
  pages           = {423--442},
  year            = {1983}
}

@incollection{whit40,
  author         = {Hassler Whitney},
  booktitle      = {Lectures in topology},
  publisher      = {University of Michigan Press},
  title          = {On the topology of differentiable manifolds},
  pages          = {101--141},
  year           = {1940}
}

@article{epstein65,
  author          = {David B.A. Epstein},
  journal         = {Proceedings of the American Mathematical Society},
  number          = {2},
  title           = {Embedding punctured manifolds},
  volume          = {16},
  pages           = {175--176},
  year            = {1965}
}

\end{document}